\newtheorem{thm}{Theorem}[section]
\newtheorem{pro}[thm]{Proposition}
\newtheorem{ex}[thm]{Example}
\newtheorem{rmk}[thm]{Remark}
\newtheorem{defi}[thm]{Definition}
\newcommand {\emptycomment}[1]{}
\newcommand{\be }{\begin{equation}}
\newcommand{\ee }{\end{equation}}
\newcommand{\g}{\mathfrak g}
\newcommand{\Comp}{\mathbb C}
\newcommand{\frkd}{\mathfrak d}
\newcommand{\br}[1]{   [ \cdot,    \cdot  ]   }
\newcommand{\gl}{\mathfrak {gl}}
\newcommand{\ad}{\mathrm{ad}}
\newcommand{\ada}{\mathfrak{ad}}
\newcommand\blfootnote[1]{%
  \begingroup
  \renewcommand\thefootnote{}\footnote{#1}%
  \addtocounter{footnote}{-1}%
  \endgroup
}
\begin{document}

\setlength{\baselineskip}{1.2\baselineskip}
%%%%%%%%%%%%%%%%%%%%%%%%%%%%%%%%%%%%%%%%%%%%%%%%%%%%%%%%%%%%%%%%%%%%%%%%%%%
%%%%%%%%%%%%%%%%%%%%%%    Title    %%%%%%%%%%%%%%%%%%%%%%%%%%%%%%%%%%%%%%%%
\title[Manin triples associated to $n$-Lie bialgebras]
{Manin triples associated to $n$-Lie bialgebras}
\author{Ying Chen}
\address{
School of Mathematical Sciences  \\
Zhejiang Normal University\\
Jinhua 321004\\
China}
\email{yingchen@zjnu.edu.cn}

\author{Chuangchuang Kang}
\address{
School of Mathematical Sciences  \\
Zhejiang Normal University\\
Jinhua 321004 \\
China}
\email{kangcc@zjnu.edu.cn}

\author{Jiafeng L\"u}
\address{
School of Mathematical Sciences    \\
Zhejiang Normal University\\
Jinhua 321004              \\
China}
\email{jiafenglv@zjnu.edu.cn}

\author{Shizhuo Yu}
\address{
School of Mathematical Sciences and LPMC    \\
Nankai University \\
Tianjin 300317              \\
China}
\email{yusz@nankai.edu.cn}

\blfootnote{*Corresponding Author: Chuangchuang Kang. Email: kangcc@zjnu.edu.cn.}

\dedicatory{\it{Dedicated to the Memory of Professor Yuri I. Manin (1937-2023)}}
\begin{abstract}
In this paper, we study the Manin triples associated to $n$-Lie bialgebras. We introduce the concept of operad matrices for $n$-Lie bialgebras. In particular, by studying  a special case of operad matrices, it leads to the notion of local cocycle $n$-Lie bialgebras. Furthermore, we establish a one-to-one correspondence between the double of $n$-Lie bialgebras and Manin triples of $n$-Lie algebras.
\end{abstract}

\subjclass[2010]{17B62,17A42,17B37,17B60}

\keywords{Manin triples, $n$-Lie algebras, $n$-Lie bialgebras, double of $n$-Lie bialgebras. }

%\maketitle

%\tableofcontents

\maketitle
%\tableofcontents
%%%%%%%%%%%%%%%%%%%%   Introduction   %%%%%%%%%%%%%%%%%%%%%%%%%%%%%%%%%%%%%%%%
\allowdisplaybreaks

\section{Introduction}\label{sec:intr}

%The aim of this paper is to extend  the  Manin triples structure from Lie algebras to  $n$-Lie algebras and get some applications,
%with the goal of establishing a one-to-one correspondence between the double of $n$-Lie bialgebras and Manin triples of $n$-Lie algebras.

Manin triples structures naturally induce a class of quasi-triangular $r$-matrix, which provide a class of Poisson manifolds and Poisson homogeneous spaces in Lie theory \cite{Semenov-Tian-Shansky}. In 1983, Drinfeld \cite{Drinfeld} introduced the notion of Lie bialgebras, which is well established as the infinitesimalisation of quantum groups \cite{Manin1}.
A Lie bialgebra consists of a Lie algebra $\g$ and a compatible Lie cobracket $\delta_\g$, such that the cobracket induces a Lie bracket on the dual space $\g^*$ and satisfies the $1$-cocycle condition \cite{Drinfeld,Quantum2}. Moreover, Lie bialgebras exponentiate to Poisson-Lie groups, which has attracted considerable interest from Poisson and symplectic geometers \cite{Quantum}.
%Quasi-triangular Lie bialgebras are probably the most important of all Lie bialgebras, as they give rise to the classical Yang-Baxter equation.
In fact, let $(\g,\delta_\g)$ be a Lie bialgebra, then there exists a canonical Lie bialgebra structure on $\g\oplus \g^*$ induced by Manin triples of Lie algebras. The Lie bialgebra on $\g\oplus \g^*$ is called the double Lie bialgebra of $\g$, and it can be used to construct Poisson manifolds \cite{Lu-1}.
The usual interpretation of the $1$-cocycle condition for Lie bialgebras is that $\delta_\g$ is a $1$-cocycle of $\g$ associated to the representation $\ad\otimes1+1\otimes\ad$ on the tensor space $\g\otimes\g$. Another way to interpret the $1$-cocycle condition is to decompose $\delta_\g$ into $\delta_\g^1$ and $\delta_\g^2$. Here,
$\delta_\g^1$ and $\delta_\g^2$ are $1$-cocycles of $\g$ associated to $\ad\otimes1$ and $1\otimes\ad$, respectively, satisfying a compatibility condition \cite{Bai}. This equivalent interpretation leads to the local cocycle condition and can be understood by an operadic point of view \cite{Loday}. Inspired by this,
we extend such structures to the $n$-Lie bialgebras, which are operad matrices of $n$-Lie bialgebras. We also establish a one-to-one correspondence between the double of $n$-Lie bialgebras and Manin triples of $n$-Lie algebras, and get some applications.

In 1985, Filippov \cite{Filippov} introduced the definition of $n$-Lie algebras (Filippov algebras). His paper considered $n$-ary multi-linear and skew-symmetric operation that satisfy the generalized Jacobi identity, which appear in many fields in mathematics and mathematical physics, particularly, $3$-Lie algebras play an important role in string theory \cite{$n$-ary,Bagger,Bagger2,Drinfeld2,Gustavsson,Matsuo,Matsuo2}.
%It is natural to construct a suitable bialgebra theory for $3$-Lie algebras.
In 2016, the authors in \cite{Bai} introduced two types of $3$-Lie bialgebras, whose compatibility conditions are given by local cocycles and double constructions respectively. The notion of 3-Lie classical Yang-Baxter equation (3-Lie CYBE) is derived from local cocycle 3-Lie bialgebras, and the solutions give rise to coboundary local cocycle 3-Lie bialgebras. Meanwhile, $3$-pre-Lie algebras give rise to solutions of the $3$-Lie CYBE. In 2017, the authors in \cite{Bai2} classified the double construction 3-Lie bialgebras for complex 3-Lie algebras in dimensions 3 and 4 and provided the corresponding pseudo-metric 3-Lie algebras of dimension 8. In \cite{Bai-R}, $n$-Lie coalgebras with rank $r$ are defined, their structures are discussed, and $n$-Lie bialgebras are introduced and their structures are investigated.

However,  there is currently unknown coboundary theory
or structure for the double space  $\g\oplus \g^*$ of $n$-Lie bialgebras, where $\g$ is an $n$-Lie algebra.
Inspired by the notations of local cocycle 3-Lie bialgebras and double
construction 3-Lie bialgebras, it is natural to consider the Manin triples associated to $n$-Lie bialgebras.

The paper is organized as follows. Theorem \ref{main} in Section \ref{sec:double} is the main result of the paper and Section \ref{sec:preliminary} as well as Section \ref{sec:n-Lie-bi} are the preparations for it. In Section \ref{sec:preliminary}, we introduce some concepts and known results about $n$-Lie algebras that will be used later. In Section \ref{sec:n-Lie-bi}, we summarize the coboundary theory of $n$-Lie algebras. We also show that each $n$-Lie bialgebra must have a dual $n$-Lie bialgebra whose dual is the $n$-Lie bialgebra itself. In Section \ref{sec:double}, we define operad matrices of $n$-Lie bialgebras and generalize local cocycle $3$-Lie bialgebras introduced by \cite{Bai} to local cocycle $n$-Lie bialgebras (Proposition \ref{defi:local-coc}). We also establish a one-to-one correspondence between the double of $n$-Lie bialgebras and  Manin triples of $n$-Lie algebras (Theorem \ref{main}).

Throughout this paper, all algebras are finite-dimensional and over a field $F$ of
characteristic zero.
\section{Preliminary results on $n$-Lie algebras}\label{sec:preliminary}
In this section, we give some preliminaries and basic results on $n$-Lie algebras from \cite{Filippov}.
\begin{defi}\label{defi:n-Lie-alg}
An {\bf $n$-Lie algebra} is a vector space $\g$ with a skew-symmetric $n$-linear map $[\cdot,\cdots,\cdot]:\otimes^n\g\rightarrow \g$ such that the following Filippov-Jacobi identity holds, for all $x_1,\cdots,x_{n-1},y_1,\cdots,y_{n} \in \g$,
\begin{equation}\label{eq:FJi}
  [x_1,\cdots,x_{n-1},[y_1,\cdots,y_n]]=\sum\limits_{i=1}^n[y_1,\cdots,y_{i-1},[x_1,\cdots,x_{n-1},y_i],y_{i+1},\cdots,y_n].
\end{equation}

\end{defi}

The Filippov-Jacobi identity can be described  as for any $X=(x_1,\cdots,x_{n-1}) \in \wedge^{n-1}\g$, the operator
\begin{equation*}
  \ad(X):\g\rightarrow\g,\quad\ad(X)(y):=[x_1,\cdots,x_{n-1},y],\quad\forall~ y \in \g,
\end{equation*}
is a derivation in the sense that
\begin{equation}\label{eq:FJi-deri}
  \ad(X)([y_1,\cdots,y_n])=\sum\limits_{i=1}^n[y_1,\cdots,y_{i-1},\ad(X)(y_i),y_{i+1},\cdots,y_n].
\end{equation}

\begin{defi}\label{defi:$n$-Lie-alg-rep}
A {\bf representation} of an $n$-Lie algebra $\g$ on a vector space M is a linear map $\rho:\wedge^{n-1}\g\rightarrow \gl(M)$ satisfying
\begin{align}
\rho([x_1,\cdots,x_n],y_1,\cdots,y_{n-2}) &= \sum\limits_{i=1}^n(-1)^{n-i}\rho(x_1,\cdots,\hat{x_i},\cdots,x_n)\rho(x_i,y_1,\cdots,y_{n-2}), \label{eq:rep-1}\\
[ \rho(x_1,\cdots,x_{n-1}),\rho(y_1,\cdots,y_{n-1})] &= \sum\limits_{i=1}^{n-1}\rho(y_1,\cdots,y_{i-1},[x_1,\cdots,x_{n-1},y_i],y_{i+1},\cdots,y_{n-1}),
\label{eq:rep-2}
\end{align}
for all $x_1,\cdots,x_n,y_1,\cdots,y_{n-1} \in \g$, where $\hat{x_i}$ means that the element $x_i$ is omitted.

\end{defi}
We denote the representation by the pair $(M,\rho)$, and say that $M$ is a $\g$-module as well. If $\rho=\ad:\wedge^{n-1}\g\rightarrow \gl(\g)$ is given by
$$
\ad_{x_1,\cdots,x_{n-1}}(x_n)=[x_1,\cdots,x_{n-1},x_n],\quad\forall~x_1,\cdots,x_n\in \g,
$$
then the pair $(\g,\ad)$ is a $\g$-module and is called the adjoint module of $\g$.

%\begin{defi}\label{defi:$n$-Lie-subalg}
%Let $\g_1,\g_2,\cdots,\g_n$ be the subspaces of $n$-Lie algebra $\g$, denote by $[\g_1, \g_2,\cdots,\g_n]$ the subspace of $\g$ generated by all vectors $[x_1, x_2, \cdots , x_n]$, where $x_i\in \g_i$.
%, for $i = 1, 2, \cdots, n$.
%If $\g$ is an $n$-Lie algebra and $\h$ is a subspace of $\g$ satisfies $[\h,\h,\cdots,\h]\subset \h$, then $\h$ is called an {\bf $n$-Lie subalgebra} of $\g$.
%\end{defi}

\begin{pro}
Let $\g$ be an $n$-Lie algebra, then for all $x_1,\cdots,x_{n-1},y_1,\cdots,y_{n}\in \g$,
\begin{eqnarray}\label{eq:FJi-biyao}
% \nonumber to remove numbering (before each equation)
  && \sum\limits_{i=1}^{n-1}[y_1,\cdots,y_{i-1},[x_1,\cdots,x_{n-1},y_i],y_{i+1},\cdots,y_{n-1},y_n] \\
\nonumber  &+&\sum\limits_{j=1}^{n-1}[x_1,\cdots,x_{j-1},[y_1,\cdots,y_{n-1},x_j],x_{j+1},\cdots,x_{n-1},y_n]=0.
\end{eqnarray}
\end{pro}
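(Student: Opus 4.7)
The plan is to apply the Filippov-Jacobi identity \eqref{eq:FJi} twice to two different nested brackets and then compare. First, I would write out \eqref{eq:FJi} as stated, and isolate the $i=n$ summand:
\begin{align*}
[x_1,\cdots,x_{n-1},[y_1,\cdots,y_n]] &= \sum_{i=1}^{n-1}[y_1,\cdots,y_{i-1},[x_1,\cdots,x_{n-1},y_i],y_{i+1},\cdots,y_n] \\
&\quad + [y_1,\cdots,y_{n-1},[x_1,\cdots,x_{n-1},y_n]].
\end{align*}

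Next I would apply \eqref{eq:FJi} a second time, but with the roles of the outer $(n-1)$-tuple and the inner $n$-tuple interchanged in a specific way: take $(y_1,\cdots,y_{n-1})$ as the outer $(n-1)$-tuple and $(x_1,\cdots,x_{n-1},y_n)$ as the inner $n$-tuple. Again extracting the $j=n$ summand gives
\begin{align*}
[y_1,\cdots,y_{n-1},[x_1,\cdots,x_{n-1},y_n]] &= \sum_{j=1}^{n-1}[x_1,\cdots,x_{j-1},[y_1,\cdots,y_{n-1},x_j],x_{j+1},\cdots,x_{n-1},y_n] \\
&\quad + [x_1,\cdots,x_{n-1},[y_1,\cdots,y_{n-1},y_n]].
\end{align*}

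To conclude, I would substitute the second equation into the first. The key observation is that the two isolated terms $[x_1,\cdots,x_{n-1},[y_1,\cdots,y_n]]$ and $[x_1,\cdots,x_{n-1},[y_1,\cdots,y_{n-1},y_n]]$ are literally the same expression, so after substitution they cancel from both sides of the resulting identity. What remains is precisely \eqref{eq:FJi-biyao}.

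I do not expect any genuine obstacle here: the entire argument is a clever bookkeeping use of the Filippov-Jacobi identity, and the only point requiring care is the relabeling in the second application (making sure that $(y_1,\ldots,y_{n-1})$ and $(x_1,\ldots,x_{n-1},y_n)$ are inserted in the correct slots of \eqref{eq:FJi}) and the correct identification of the two inner $n$-brackets that cancel.
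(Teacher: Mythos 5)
Your proposal is correct and is essentially identical to the paper's own proof: both apply the Filippov--Jacobi identity once to $[x_1,\cdots,x_{n-1},[y_1,\cdots,y_n]]$, isolate the $i=n$ term, expand that term by a second application with $(y_1,\cdots,y_{n-1})$ as the outer tuple and $(x_1,\cdots,x_{n-1},y_n)$ as the inner tuple, and cancel the repeated bracket $[x_1,\cdots,x_{n-1},[y_1,\cdots,y_n]]$ from both sides. No gaps.
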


\begin{proof}
By \eqref{eq:FJi}, for all $x_1,\cdots,x_{n-1},y_1,\cdots,y_{n}\in \g$,
\begin{eqnarray*}
% \nonumber to remove numbering (before each equation)
  [x_1,\cdots,x_{n-1},[y_1,\cdots,y_n]] &=& \sum\limits_{i=1}^{n-1}[y_1,\cdots,y_{i-1},[x_1,\cdots,x_{n-1},y_i],y_{i+1},\cdots,y_n]\\
  &+&[y_1,\cdots,y_{n-1},[x_1,\cdots,x_{n-1},y_n]] \\
  &=& \sum\limits_{i=1}^{n-1}[y_1,\cdots,y_{i-1},[x_1,\cdots,x_{n-1},y_i],y_{i+1},\cdots,y_{n-1},y_n]\\
  &+&\sum\limits_{j=1}^{n-1}[x_1,\cdots,x_{j-1},[y_1,\cdots,y_{n-1},x_j],x_{j+1},\cdots,x_{n-1},y_n]\\
  &+&[x_1,\cdots,x_{n-1},[y_1,\cdots,y_n]],
\end{eqnarray*}
implying \eqref{eq:FJi-biyao}.
\end{proof}
Let $V$ be a vector space, $V^*$ be the dual space of $V$. For each positive integer $k$, we identify the tensor product $\otimes^k V$ with the space of multi-linear
maps from $\underbrace{V^*\times \cdots \times V^*}_{k-times}\rightarrow \Comp$, such that
\begin{equation}\label{eq:action}
\langle\xi_1\otimes \cdots\otimes \xi_k,v_1\otimes\cdots\otimes v_k\rangle=\langle\xi_1,v_1\rangle\cdots\langle\xi_k,v_k\rangle,\quad \forall ~\xi_1,\cdots,\xi_k\in V^*,v_1,\cdots,v_k\in V,
\end{equation}
where $\langle \xi_i,v_i\rangle=\xi_i(v_i)$, $1\leq i\leq k$. For $v_1,\cdots,v_k\in V$, define that
$$
v_1\wedge v_2 \wedge \cdots\wedge v_k=\sum_{\sigma\in S_k}sgn(\sigma)v_{\sigma(1)}\otimes v_{\sigma(2)}\cdots v_{\sigma(k)}\in \wedge^kV\subset \otimes^kV.
$$

\section{$n$-Lie bialgebras}\label{sec:n-Lie-bi}

In this section, we first introduce the coboundary theory of $n$-Lie algebras, then we give the notions of $n$-Lie bialgebras and the coadjoint representation of $n$-Lie algebras.
Finally, in Proposition \ref {pro:$n$-Lie-bialg-dual}, we  show that each $n$-Lie bialgebra must have a dual $n$-Lie bialgebra whose dual is the $n$-Lie bialgebra itself.

\subsection{$n$-Lie algebra cohomology}

Let $\g$ be an $n$-Lie algebra over $F$, for all $x_1,\cdots,x_{n-1}\in \g$, denote the elements $X=x_1\wedge x_2\wedge\cdots\wedge x_{n-1}$ in $\wedge^{n-1}\g$ by $X=(x_1,\cdots,x_{n-1})$.
Let $(M, \rho)$ be a $\g$-module. For any $X=(x_1, \cdots, x_{n-1})\in
\wedge^{n-1}\g$, and $a\in M$, the action of $\g$ on $M$
is denoted by $X(x_1, \cdots, x_{n-1})_\cdot a$ or $X_\cdot a$.

%Let $(M,\rho)$ be a representation of $\g$. An  action of $\g$ on $M$ is a $\g$-module  $\rho(X)$, and the action of $\g$ on $M$ is denoted  by $X(x_1,\cdots,x_{n-1})_\cdot a$, for all $a\in M$ and $X^1,\cdots,X^{n-1}\in \g$.

We know that if $\g$ is an $n$-Lie algebra,  then $(\otimes^p \g, \ad^{(p)})$ is a $\g$-module for any positive integer $p$, where for all $y_1\otimes\cdots\otimes y_p \in\otimes^p\g$, $x_1, \cdots x_{n-1}\in \g$,
\begin{equation}\label{eq:ad-p}
% \nonumber to remove numbering (before each equation)
  X_\cdot (y_1\otimes\cdots\otimes y_p) := \ad_{x_1,\cdots,x_{n-1}}^{(p)}(y_1\otimes \cdots\otimes y_p)
   = \sum_{i=1}^p y_1\otimes \cdots\otimes\ad_{x_1,\cdots,x_{n-1}}(y_i)\otimes\cdots \otimes y_p.
\end{equation}
\begin{defi} \cite{Casas}
Let $\g$ be an $n$-Lie algebra and $(M,\rho)$ be a representation of $\g$. {\bf $k$-cochains} on $\g$ with values in $M$ is the set
\begin{equation*}\label{defi:k-cochains}
  C^k(\g;M):=~\{~linear~maps~u:\underbrace{\wedge^{n-1}\g\otimes\cdots \otimes\wedge^{n-1}\g}_{k-1 -times}\wedge\g\rightarrow M~\}.
\end{equation*}
\end{defi}

\begin{defi} \cite{Casas}
Let $\g$ be an $n$-Lie algebra and $(M,\rho)$ be a representation of $\g$. The coboundary operator of a $k$-cochain $u$ on $\g$ with values in $M$ is the map $\delta_\rho :C^k(\g;M)\rightarrow C^{k+1}(\g;M)$,  such that for all $X_i=(x_1^i,\cdots , x_{n-1}^i) \in \wedge^{n-1}\g, 1\leq i\leq k$, $z \in \g$,
\begin{align*}\label{defi:k-coboundary}
% \nonumber to remove numbering (before each equation)
  &(\delta_\rho u)(X_1,X_2,\cdots,X_k,z) \\
  =&\sum\limits_{i=1}^{k}(-1)^{i+1}\rho(x_1^i,\cdots,x_{n-1}^i)u(X_1,\cdots,\hat{X_i},\cdots,X_k,z) \\
  +&\sum\limits_{i=1}^{n-1}(-1)^{n+k-i+1}\rho(x_1^k,\cdots,\hat{x}_i^k,\cdots,x_{n-1}^k,z)u(X_1,\cdots,X_{k-1},\hat{X_k},x_i^k) \\
  +&\sum\limits_{i=1}^{k}(-1)^{i}u(X_1,\cdots,\hat{X_i},\cdots,X_k,[x_1^i,\cdots,x_{n-1}^i,z])  \\
  +&\sum\limits_{1\leq i\leq j}^{k}(-1)^{i}u(X_1,\cdots,\hat{X_i},\cdots,X_{j-1},\sum\limits_{m=1}^{n-1}[x_1^i,\cdots,x_{n-1}^i,x_m^j]\wedge x_1^j\wedge\cdots\wedge\hat{x}_m^j\wedge\cdots\wedge x_{n-1}^j,\cdots,X_k,z),
\end{align*}
where $\hat{X_i}$ indicates that the element $X_i$ is omitted.
\end{defi}

\begin{pro}\label{pro:complex-$n$-Lie}\cite{Casas}
For any $k$-cochain $u$, $k\geq1$, $\delta_\rho(\delta_\rho u)=0$.
\end{pro}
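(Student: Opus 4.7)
The plan is to verify $\delta_\rho^2 u = 0$ by direct expansion of $\delta_\rho(\delta_\rho u)(X_1,\ldots,X_{k+1},z)$ and then pairwise cancellation of the resulting terms. Since the coboundary formula decomposes each $\delta_\rho v(X_1,\ldots,X_{k+1},z)$ into four families of summands (call them type A: the $\rho(X_i) v$ terms; type B: the $\rho(\hat{x}_i^{k+1},z)v$ terms; type C: the $v(\ldots,[X_i,z])$ terms; type D: the $v(\ldots,X_i\cdot X_j,\ldots)$ terms with the wedge-Leibniz action), the expansion of $\delta_\rho^2 u$ naturally splits into at most sixteen sub-families of the shape $(\text{X})\circ(\text{Y})$ with $\text{X},\text{Y}\in\{\text{A,B,C,D}\}$. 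The strategy is to group these sixteen contributions according to their combinatorial shape and show that within each group either the terms cancel in pairs through sign reversal when swapping two indices $i\leftrightarrow j$, or two distinct groups are converted into one another by one of the three structural identities available to us.

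First I would carry out the easy half: the doubled-$\rho$ terms. Here the sub-families $(\text{A})\circ(\text{A})$ and $(\text{A})\circ(\text{B})$, $(\text{B})\circ(\text{A})$ combine into expressions involving $[\rho(X_i),\rho(X_j)]$, and likewise the $(\text{B})\circ(\text{B})$ terms produce a composition $\rho(\ldots)\rho(\ldots,z)$ reminiscent of the left-hand side of \eqref{eq:rep-1}. Representation axiom \eqref{eq:rep-2} rewrites the commutator $[\rho(X_i),\rho(X_j)]$ as a sum of $\rho(X_i\cdot X_j)$-type terms, which exactly match the $(\text{D})\circ(\text{A})$ contributions with opposite sign; and \eqref{eq:rep-1} similarly converts the $\rho\rho(\ldots,z)$-pieces into a single $\rho([X_j,z])$-type term that kills an $(\text{D})\circ(\text{B})$ piece. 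All mixed $\rho$-free-slot vs.\ cochain-acting-inside-slot terms of the form $(\text{A})\circ(\text{C})$ cancel against $(\text{C})\circ(\text{A})$ by the usual sign rule $(-1)^{i+1}(-1)^j + (-1)^{j+1}(-1)^i=0$, and similarly for $(\text{B})\circ(\text{C})$ versus $(\text{C})\circ(\text{B})$.

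The genuine content sits in the purely cochain-valued block: the $(\text{C})\circ(\text{C})$, $(\text{C})\circ(\text{D})$, $(\text{D})\circ(\text{C})$ and $(\text{D})\circ(\text{D})$ terms. These contain nested brackets of the shape $[X_i,[X_j,z]]$, $[X_j,[X_i,z]]$, $[[X_i,X_j],z]$ and $[X_i,X_j\cdot X_l]$, and their cancellation is exactly what the Filippov-Jacobi identity \eqref{eq:FJi} together with its symmetrized form \eqref{eq:FJi-biyao} is designed to ensure: \eqref{eq:FJi-deri} reorganizes $\ad(X_i)[X_j,z]$ into a sum over slot-insertions, which matches the $(\text{D})\circ(\text{C})$ terms, while the antisymmetric piece $(\text{C})\circ(\text{C})-(\text{C})\circ(\text{C})^{\text{swap}}$ becomes $[[X_i,X_j]_{\text{wedge}},z]$ by \eqref{eq:FJi-biyao} and annihilates a $(\text{D})\circ(\text{C})$ contribution of the opposite sign. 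Finally, the $(\text{D})\circ(\text{D})$ block — which involves iterating the wedge-Leibniz action on $X_l$ — is handled by applying \eqref{eq:FJi-biyao} in each slot separately, reducing to the same mechanism as in ordinary Lie algebra cohomology.

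The main obstacle is not conceptual but bookkeeping: tracking the signs $(-1)^{i+1}$, $(-1)^{n+k-i+1}$ and $(-1)^{i}$ through the two applications of $\delta_\rho$, and correctly handling the splitting of the argument list at the position where a wedge-Leibniz insertion $[X_i,x_m^j]\wedge\cdots$ lengthens one slot. A clean way to manage this is to fix the positional conventions for $\hat{X}_i,\hat{X}_j$ (always $i<j$) and to treat the last slot $z$ separately throughout, so that the A/B split is unambiguous. Once every term is indexed by a quadruple (type, type, $i$, $j$) with a consistent ordering convention, the cancellations become mechanical checks of sign matching, and the three identities \eqref{eq:rep-1}, \eqref{eq:rep-2}, \eqref{eq:FJi-biyao} provide all the relations needed to close the argument.
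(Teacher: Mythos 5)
The paper does not prove this proposition at all: it is imported from \cite{Casas}, where $\delta_\rho^2=0$ is obtained structurally, by identifying the complex $C^k(\g;M)=\Hom(\otimes^{k-1}(\wedge^{n-1}\g)\otimes\g,M)$ with part of the ordinary Leibniz cochain complex of the Leibniz algebra $\wedge^{n-1}\g$ (with bracket $X\cdot Y=\sum_i y_1\wedge\cdots\wedge\ad(X)y_i\wedge\cdots\wedge y_{n-1}$) acting on $\Hom(\g,M)$, so that the square-zero property is inherited from the binary case. Your direct expansion into the sixteen blocks $(\mathrm{X})\circ(\mathrm{Y})$ is the elementary alternative (essentially Takhtajan's original verification), and your matching of blocks is the right one: the $\rho\rho$ commutators against the wedge-Leibniz insertions via \eqref{eq:rep-2}, the compositions $\rho(\cdots)\rho(\cdots,z)$ against $\rho([\cdots,z])$ via \eqref{eq:rep-1}, and the purely cochain-valued nested brackets via \eqref{eq:FJi} and \eqref{eq:FJi-biyao}. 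What the structural route buys is exactly the avoidance of the bookkeeping you name as the main obstacle; what your route buys is self-containedness, needing only the identities already displayed in the paper.

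Two concrete cautions before this can count as a proof. First, the only sign identity you actually write down is false: $(-1)^{i+1}(-1)^{j}+(-1)^{j+1}(-1)^{i}=2(-1)^{i+j+1}\neq 0$. The genuine $(\mathrm{A})\circ(\mathrm{C})$ versus $(\mathrm{C})\circ(\mathrm{A})$ cancellation comes from the index shift caused by omitting an element: for $i<j$ the inner occurrence of $X_j$ sits in position $j-1$, so the competing coefficients are $(-1)^{i+1}(-1)^{j-1}$ and $(-1)^{j}(-1)^{i+1}$, whose sum is zero. You flag the positional conventions as the hard part, but the one place you commit to a formula gets them wrong. Second, the decisive blocks are not executed: the interactions involving the type-B terms (whose sign $(-1)^{n+k-i+1}$ depends on both $n$ and $k$ and which swap the distinguished last argument $z$ for an $x_i^{k+1}$) and the $(\mathrm{D})\circ(\mathrm{D})$ block are precisely where such computations either close or expose an inconsistency, and none of them is carried through. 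As it stands this is a credible and correctly organized plan rather than a proof; carrying out in full at least the B-against-everything and $(\mathrm{D})\circ(\mathrm{D})$ cancellations is needed to certify it.
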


%For the case $k=1$, a $1$-cochain on $\g$ with values in $M$ is just a linear map $u$  from $\g$ to $M$, i.e.
%\begin{equation*}
%  k=1,~u:\g\rightarrow M.
%\end{equation*}
%For all $X=(x_1,\cdots x_{n-1}) \in \wedge^{n-1}\g,~z\in\g$, the coboundary operator of a $1$-cochain $u$  is given by
%\begin{align*}
%% \nonumber to remove numbering (before each equation)
%  \delta u(X,z) = X(x_1,\cdots x_{n-1})_\cdot u(z)
%    +\sum\limits_{i=1}^{n-1}(-1)^{i+1}X(x_1,\cdots,\hat{x_i},\cdots,x_{n-1},z)_\cdot u(x_i)-u([x_1,\cdots,x_{n-1},z]).
%\end{align*}
%
%Let $u$  be a $1$-cochain on $\g$ with values in $M$,
%for any $X_1=(x_1^1,\cdots x_{n-1}^1), X_2=(x_1^2,\cdots x_{n-1}^2) \in \wedge^{n-1}\g$, $z \in \g$,
%\begin{align*}
%% \nonumber to remove numbering (before each equation)
%  (\delta(\delta u))(X_1, X_2, z) &=\rho(x_1^1,\cdots,x_{n-1}^1)\delta u(X_2,z)-\rho(x_1^2,\cdots,x_{n-1}^2)\delta u(X_1,z) \\
%   &+\sum\limits_{i=1}^{n-1}(-1)^{n-i+1}\rho(x_1^2,\cdots,\hat{x}_i^2,\cdots,x_{n-1}^2,z)\delta u(X_1,x_i^2) \\
%   &-\delta u(X_2,[x_1^1,\cdots,x_{n-1}^1,z])+\delta u(X_1,[x_1^2,\cdots,x_{n-1}^2,z]) \\
%   &-\sum\limits_{m=1}^{n-1}(-1)^{m+1}\delta u([x_1^1,\cdots,x_{n-1}^1,x_m^2]\wedge x_1^2\wedge\cdots\wedge\hat{x}_m^2\wedge\cdots\wedge x_{n-1}^2,z).
%\end{align*}
%Since $(M, \rho)$ is a representation of $\g$, $\delta(\delta u)=0$.

\begin{defi}\label{defi:k-cocycle}\cite{Casas}
A $k$-cochain $u$ is called a {\bf $k$-cocycle} if it satisfies
\begin{equation*}
  \delta_\rho u=0.
\end{equation*}
A $k$-cochain $u$ is called a {\bf $k$-coboundary} $(k\geq2)$ if there exists a $(k-1)$-cochain $v$, such that
\begin{equation*}
  u=\delta_\rho v.
\end{equation*}
\end{defi}
By the Proposition \ref{pro:complex-$n$-Lie}, any $k$-coboundary is a $k$-cocycle. By the Definition \ref{defi:k-cocycle}, the quotient of the vector space of $k$-cocycles by the vector space of $k$-coboundaries is called the $k$-th cohomology vector space of $\g$ with values in $M$.
See \cite{Takhtajan-2} for more details.
\subsection{$n$-Lie bialgebras}
Now assume that $\g$ is an $n$-Lie algebra and $\gamma$ is a linear map from $\g$ to $\otimes^n\g$ whose transpose is denoted by $^t\gamma:\otimes^n\g^\ast\rightarrow\g^\ast$.
%If $\g$ is infinite-dimensional, $\otimes^n\g^\ast$ is a subspace of $(\otimes^n\g)^\ast$, and what we are considering is in fact the restriction of the transpose of $\gamma$. Recall that a linear map on $\otimes^n\g^\ast$ can be identified with an $n$-linear map on $\g^\ast$.

\begin{defi}\label{defi:$n$-Lie-bialg} {\rm{\cite{Bai-R}}}
An {\bf $n$-Lie bialgebra} is an $n$-Lie algebra $\g$ with a linear map $\gamma:\g\rightarrow\otimes^n\g$ such that
\begin{enumerate}[{\rm(i)}]
  \item $^t\gamma:\otimes^n\g^\ast\rightarrow\g^\ast$ defines an $n$-Lie bracket on $\g^\ast$, i.e., $^t\gamma$ is a skew-symmetric $n$-linear map on $\g^\ast$ satisfying the Filippov-Jacobi identity;
  \item $\gamma$ is a $1$-cocycle on $\g$ with values in $\otimes^n\g$, where $\g$ acts on $\otimes^n\g$ by the adjoint representation $\ad^{(n)}$.
\end{enumerate}
\end{defi}

Condition (i) is equivalent to the following two identities, for all $\xi_1, \cdots, \xi_{n}, \eta_1,\cdots, \eta_n\in \g^*$,
\begin{align*}
[\xi_{\sigma(1)},\cdots,\xi_{\sigma(n)}]_{\g^\ast} &=sgn(\sigma)[\xi_1,\cdots,\xi_n]_{\g^\ast},\\
[\xi_1,\cdots,\xi_{n-1},[\eta_1,\cdots,\eta_n]_{\g^\ast}]_{\g^\ast}&=
\sum\limits_{i=1}^{n}[\eta_1,\cdots,\eta_{i-1},[\xi_1,\cdots,\xi_{n-1},\eta_i]_{\g^\ast},\eta_{i+1},\cdots,\eta_n]_{\g^\ast}.
\end{align*}

Condition (ii) means that the 2-cochain $\delta\gamma$ vanishes, i.e., for all $x_1,\cdots,x_n \in \g$,
\begin{equation}\label{eq:ii'}
 \gamma([x_1,\cdots,x_n])=\sum\limits_{i=1}^{n}(-1)^{n-i}\ad^{(n)}_{x_1,\cdots,\hat{x_i},\cdots,x_n}(\gamma(x_i)).
\end{equation}
Let $[\cdot,\cdots,\cdot]_{\g^*}:\otimes^n \g^*\rightarrow \g^*$ be the $n$-Lie bracket defined by $\gamma$, that is for all $\xi_1,\cdots,\xi_n\in\g^\ast$,
\begin{equation}
  [\xi_1,\cdots,\xi_n]_{\g^\ast}=^t\gamma(\xi_1\otimes\cdots\otimes\xi_n).
\end{equation}
Then by the Definition \ref{defi:$n$-Lie-bialg}, for all $x \in \g$,
\begin{equation}\label{eq:gamma-tran}
  \langle~[\xi_1,\cdots,\xi_n]_{\g^\ast},x~\rangle = \langle~\gamma(x),\xi_1\otimes\cdots\otimes\xi_n~\rangle.
\end{equation}

Let $1:\g\rightarrow \g$ be the identity map,  an alternate way of \eqref{eq:ii'} is
\begin{align*}\label{eq:expandii'}
% \nonumber to remove numbering (before each equation)
  \langle~[\xi_1,\cdots,\xi_n]_{\g^\ast},[x_1,\cdots,x_n]~\rangle
  =\sum\limits_{i=1}^{n}\langle~\xi_1\otimes\cdots\otimes\xi_n,\sum_{j=1}^n (1\otimes^{j-1} \ad_{x_1,\cdots,\hat{x_i},\cdots,x_n}\otimes^{n-j}1)\gamma(x_i)~\rangle.
  %&&(\ad_{x_1,\cdots,\hat{x_i},\cdots,x_n}\otimes1\otimes\cdots\otimes1\\
  %&+&1\otimes\ad_{x_1,\cdots,\hat{x_i},\cdots,x_n}\otimes1\otimes\cdots\otimes1
  %+\cdots+1\otimes\cdots\otimes1\otimes\ad_{x_1,\cdots,\hat{x_i},\cdots,x_n})(\gamma(x_i))~\rangle.
\end{align*}
Using the Sweedler notation, write $\gamma(x_i)=z_1\otimes \cdots\otimes z_n$, we have
\begin{align*}
% \nonumber to remove numbering (before each equation)
  \sum_{j=1}^n (1\otimes^{j-1} \ad_{x_1,\cdots,x_{n-1}}\otimes^{n-j}1)
  (z_1\otimes \cdots\otimes z_n)
  = \sum_{j=1}^n z_1\otimes\cdots\otimes[x_1,\cdots,x_{n-1},z_j]\otimes \cdots\otimes z_n.
\end{align*}

\subsection{The coadjoint representation of $n$-Lie algebras}
Now we introduce the definition of the coadjoint representation of $n$-Lie algebras on the dual vector space.

\begin{pro}\label{pro:co-ad}
Let $\g$ be an $n$-Lie algebra and $\g^\ast$ be the dual vector space. Set
$\ad^*=-^t\ad$, that is, for all $x_1, \cdots, x_{n-1}\in \g$,
%\begin{equation}\label{eq:$n$-Lie-coadjoint-rep}
%  \ad^\ast_{x_1,\cdots,x_{n-1}}=-^t\ad_{x_1,\cdots,x_{n-1}}:\wedge^{n-1}\g\rightarrow End(\g^*),
%\end{equation}
% i.e.,
$\ad^\ast_{x_1,\cdots,x_{n-1}}$ is the endomorphism of $\g^\ast$ satisfying
\begin{equation}\label{eq:dual-rep}
  \langle~\ad^\ast_{x_1,\cdots,x_{n-1}}\xi, x~\rangle=-\langle~\xi, \ad_{x_1,\cdots,x_{n-1}}x~\rangle,\quad\forall~x \in \g, \xi \in \g^\ast.
\end{equation}
Then $(\g^\ast,\ad^\ast)$ is a representation of $\g$ on $\g^\ast$.
\end{pro}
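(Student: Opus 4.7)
The plan is to verify the two defining axioms \eqref{eq:rep-1} and \eqref{eq:rep-2} of a representation for $\rho = \ad^\ast$. For each axiom I would pair both sides of the operator identity on $\g^\ast$ with an arbitrary $\xi \in \g^\ast$ and $z \in \g$, and use \eqref{eq:dual-rep} to convert it into an identity of iterated $n$-brackets in $\g$ that must hold for every $z$. The one preliminary computation is $\langle \ad^\ast_{x_1,\cdots,x_{n-1}}\ad^\ast_{y_1,\cdots,y_{n-1}}\xi,z\rangle = \langle \xi,[y_1,\cdots,y_{n-1},[x_1,\cdots,x_{n-1},z]]\rangle$, where the two sign flips from the two coadjoint actions cancel and the composition order reverses under transpose.

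For axiom \eqref{eq:rep-2}, this translation reduces the claim to the identity $[x_1,\cdots,x_{n-1},[y_1,\cdots,y_{n-1},z]] - [y_1,\cdots,y_{n-1},[x_1,\cdots,x_{n-1},z]] = \sum_{i=1}^{n-1}[y_1,\cdots,[x_1,\cdots,x_{n-1},y_i],\cdots,y_{n-1},z]$, which is exactly the Filippov-Jacobi identity \eqref{eq:FJi} with the term corresponding to the action on $z$ transposed to the left. So this axiom is immediate.

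Axiom \eqref{eq:rep-1} reduces, under the same pairing, to the identity $[[x_1,\cdots,x_n],y_1,\cdots,y_{n-2},z] = \sum_{i=1}^n (-1)^{n-i+1}[x_i,y_1,\cdots,y_{n-2},[x_1,\cdots,\hat{x_i},\cdots,x_n,z]]$ in $\g$. I would prove this by applying Filippov-Jacobi to each summand on the right, with $(x_i,y_1,\cdots,y_{n-2})$ as the outer $(n-1)$-tuple and $(x_1,\cdots,\hat{x_i},\cdots,x_n,z)$ as the inner $n$-tuple. Each such expansion produces terms where the inner bracket acts on $z$, which combine (via the adjoint-representation version of \eqref{eq:rep-1}, itself a direct consequence of Filippov-Jacobi and skew-symmetry) into $[[x_1,\cdots,x_n],y_1,\cdots,y_{n-2},z]$; the remaining terms, where the inner bracket acts on one of the remaining $x_j$'s, are to be collapsed using equation \eqref{eq:FJi-biyao} together with the skew-symmetry of the $n$-bracket.

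The main obstacle is the sign bookkeeping in axiom \eqref{eq:rep-1}: the minus in $\ad^\ast = -{}^t\ad$, the composition-reversing nature of the transpose, and the skew-symmetry of the $n$-linear bracket conspire in such a way that the desired identity is not the naive transpose of axiom \eqref{eq:rep-1} for $\ad$. Exploiting \eqref{eq:FJi-biyao}, which is already proved in the previous section and packages both Filippov-Jacobi and skew-symmetry into a form whose two sums of double brackets cancel, is the cleanest way to complete the argument without re-expanding the cancellation term by term.
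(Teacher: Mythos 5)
Your reduction of both axioms to bracket identities in $\g$ via the pairing is correct, and your treatment of \eqref{eq:rep-2} agrees with the paper's: it is exactly the Filippov--Jacobi identity after the two sign flips cancel. The target identity you state for \eqref{eq:rep-1}, namely $[[x_1,\cdots,x_n],y_1,\cdots,y_{n-2},z]=\sum_{i=1}^n(-1)^{n-i+1}[x_i,y_1,\cdots,y_{n-2},[x_1,\cdots,\hat{x_i},\cdots,x_n,z]]$, is also the right one. The gap is in how you propose to prove it.

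Write $\ad_{\hat{x_i}}=\ad_{x_1,\cdots,\hat{x_i},\cdots,x_n}$ and $\ad_{x_i,y}=\ad_{x_i,y_1,\cdots,y_{n-2}}$. Expanding the $i$-th summand $\ad_{x_i,y}\ad_{\hat{x_i}}(z)$ by Filippov--Jacobi as you describe, the term in which $\ad_{x_i,y}$ lands on $z$ is $\ad_{\hat{x_i}}\ad_{x_i,y}(z)$, so the ``acts on $z$'' contribution is $\sum_i(-1)^{n-i+1}\ad_{\hat{x_i}}\ad_{x_i,y}(z)$. The adjoint version of \eqref{eq:rep-1} is $\sum_i(-1)^{n-i}\ad_{\hat{x_i}}\ad_{x_i,y}=\ad_{[x_1,\cdots,x_n],y_1,\cdots,y_{n-2}}$ --- exponent $n-i$, not $n-i+1$ --- so these terms combine into $-[[x_1,\cdots,x_n],y_1,\cdots,y_{n-2},z]$, the \emph{negative} of what you assert. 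Consequently the cross terms cannot ``collapse'': they must sum to $+2[[x_1,\cdots,x_n],y_1,\cdots,y_{n-2},z]$. (Sanity check at $n=2$: $[x_1,[x_2,z]]-[x_2,[x_1,z]]$ expands to $\bigl([x_2,[x_1,z]]-[x_1,[x_2,z]]\bigr)+2[[x_1,x_2],z]$, and the first group equals $-[[x_1,x_2],z]$, so the cross terms carry everything.) Moreover, applying \eqref{eq:FJi-biyao} to the tuples $(x_1,\cdots,\hat{x_i},\cdots,x_n)$ and $(x_i,y_1,\cdots,y_{n-2})$ with extra element $z$ converts the $i$-th block of cross terms into $-(-1)^{n-i}[[x_1,\cdots,x_n],y,z]-\sum_{k}[x_i,y_1,\cdots,[\hat{x_i},y_k],\cdots,y_{n-2},z]$; after summing over $i$ you are left needing $\sum_{i,k}(-1)^{n-i}[x_i,y_1,\cdots,[x_1,\cdots,\hat{x_i},\cdots,x_n,y_k],\cdots,y_{n-2},z]=-(n-2)[[x_1,\cdots,x_n],y,z]$, a further nontrivial identity (vacuous only for $n=2$) that your outline does not address. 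For comparison, the paper works from the other end: it rewrites $-[[x_1,\cdots,x_n],y,z]$ as $(-1)^n[y_1,\cdots,y_{n-2},z,[x_1,\cdots,x_n]]$ and applies Filippov--Jacobi with inner tuple $(x_1,\cdots,x_n)$, which produces exactly the diagonal terms $\sum_i(-1)^{n-i+1}\ad_{\hat{x_i}}\ad_{x_i,y}(z)$ and no cross terms over the $x_j$'s, leaving only the reordering of $\ad_{\hat{x_i}}$ past $\ad_{x_i,y}$ inside the signed sum. As written, your plan does not close.
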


\begin{proof}
By \eqref{eq:FJi-biyao} and \eqref{eq:dual-rep}, for all $x_1,\cdots,x_n,~y_1,\cdots,y_{n-1},~z \in \g, ~\xi \in \g^\ast$, we have
\begin{align*}
  &\langle~\ad_{[x_1,\cdots,x_n],y_1,\cdots,y_{n-2}}^\ast\xi,z~\rangle \\
 % =&& -\langle~\xi,[[x_1,\cdots,x_n],y_1,\cdots,y_{n-2},z]~\rangle \\
  =&(-1)^n\langle~\xi,[y_1,\cdots,y_{n-2},z,[x_1,\cdots,x_n]]~\rangle,\\
  =&(-1)^n\sum\limits_{i=1}^{n}\langle~\xi,[x_1,\cdots,x_{i-1},[y_1,\cdots,y_{n-2},z,x_i],x_{i+1},\cdots,x_n]~\rangle\\
=&(-1)^{n-i}\sum\limits_{i=1}^{n}\langle~\xi,[x_i,y_1,\cdots,y_{n-2},[x_1,\cdots,\hat{x_i},\cdots,x_n,z]]~\rangle\\
=&\langle~\sum\limits_{i=1}^{n}(-1)^{n-i}\ad_{x_1,\cdots,\hat{x_i},\cdots,x_n}^\ast\ad_{x_i,y_1,\cdots,y_{n-2}}^\ast\xi,z~\rangle.
\end{align*}
And
\begin{align*}
% \nonumber to remove numbering (before each equation)
  &\langle~\ad_{x_1,\cdots,x_{n-1}}^\ast\ad_{y_1,\cdots,y_{n-1}}^\ast\xi,z~\rangle-\langle~\ad_{y_1,\cdots,y_{n-1}}^\ast\ad_{x_1,\cdots,x_{n-1}}^\ast\xi,z~\rangle \\
  =&\langle~\xi,[y_1,\cdots,y_{n-1},[x_1,\cdots,x_{n-1},z]]~\rangle-\langle~\xi,[x_1,\cdots,x_{n-1},[y_1,\cdots,y_{n-1},z]]~\rangle \\
  =& \langle \xi,\sum\limits_{i=1}^{n-1}[x_1,\cdots,x_{i-1},[y_1,\cdots,y_{n-1},x_i],x_{i+1},\cdots,x_{n-1},z]\rangle\\
=& \langle~\xi,-\sum\limits_{i=1}^{n-1}[y_1,\cdots,y_{i-1},[x_1,\cdots,x_{n-1},y_i],y_{i+1},\cdots,y_{n-1},z]~\rangle\\
=&\langle~\sum\limits_{i=1}^{n-1}\ad_{y_1,\cdots,y_{i-1},[x_1,\cdots,x_{n-1},y_i],y_{i+1},\cdots,y_{n-1}}^\ast\xi,z~\rangle.
\end{align*}

Then we get  for all $x_1, \cdots, x_n, y_1, \cdots, y_{n-1}\in \g$,
\begin{align*}
% \nonumber to remove numbering (before each equation)
  \ad_{[x_1,\cdots,x_n],y_1,\cdots,y_{n-2}}^\ast &= \sum\limits_{i=1}^{n}(-1)^{n-i}\ad_{x_1,\cdots,\hat{x_i},\cdots,x_n}^\ast\ad_{x_i,y_1,\cdots,y_{n-2}}^\ast,\\
\ad_{x_1,\cdots,x_{n-1}}^\ast\ad_{y_1,\cdots,y_{n-1}}^\ast-\ad_{y_1,\cdots,y_{n-1}}^\ast\ad_{x_1,\cdots,x_{n-1}}^\ast &= \sum\limits_{i=1}^{n-1}\ad_{y_1,\cdots,y_{i-1},[x_1,\cdots,x_{n-1},y_i],y_{i+1},\cdots,y_{n-1}}^\ast.
\end{align*}
Therefore, $(\g^\ast,\ad^\ast)$ is a representation of $\g$ on $\g^\ast$.
\end{proof}

\begin{defi}\label{defi:$n$-Lie-alg-coadjoint-rep}
The representation $(\g^\ast, \ad^\ast)$  is called the {\bf coadjoint representation} of $\g$.
\end{defi}

\subsection{The dual of $n$-Lie bialgebras}

Let  $(\g^\ast, \ad^\ast)$  be the  coadjoint representation of $n$-Lie algebra $\g$.
By  \eqref{eq:action}  and \eqref{eq:dual-rep}, for all $\xi_1,\cdots,\xi_n\in \g^*$ and $y_1\otimes\cdots\otimes y_n$ in $\otimes^n\g$, we have
\begin{equation}\label{eq:dual-ten}
  \langle~\ad^{*(p)}(\xi_1\otimes\cdots\otimes\xi_n),y_1\otimes\cdots\otimes y_n~\rangle=
  -\langle~\xi_1\otimes\cdots\otimes\xi_n,\ad^{(p)}(y_1\otimes\cdots\otimes y_n)~\rangle,
\end{equation}
where $\ad^{(p)}$ is defined by \eqref{eq:ad-p}.

Let $(\g,\gamma)$ be an $n$-Lie bialgebra in the Definition \ref{defi:$n$-Lie-bialg} and $[\cdot,\cdots,\cdot]_{\g^*}:\otimes^n \g^*\rightarrow \g^*$ be the $n$-Lie bracket defined by $\gamma$.
 By \eqref{eq:ii'} and \eqref{eq:dual-ten}, for all $\xi_1,\cdots,\xi_n\in \g^*$, $x_1,\cdots,x_n\in \g$, we have
\begin{align*}
  &\langle~[\xi_1,\cdots,\xi_n]_{\g^\ast}, [x_1,\cdots,x_n]~\rangle\\
  =&\langle~\xi_1\otimes\cdots \otimes\xi_n, \gamma([x_1,\cdots,x_n])~\rangle\\
  =&\langle~\xi_1\otimes\cdots \otimes\xi_n, \sum\limits_{i=1}^{n}(-1)^{n-i}\ad^{(n)}_{x_1,\cdots,\hat{x_i},\cdots,x_n}(\gamma(x_i))~\rangle\\
  =&\sum\limits_{i,j=1}^{n}(-1)^{n-i+1}\langle~[\xi_1,\cdots,\ad_{x_1,\cdots,\hat{x_i},\cdots,x_n}^*(\xi_j),\cdots,\xi_n]_{\g^\ast},x_i~\rangle\\
  =&\sum\limits_{i,j=1}^{n}(-1)^{i+j-1}
  \langle~[\xi_1,\cdots,\xi_{i-1},\xi_{i+1},\cdots,\xi_n,\ad^\ast_{x_1,\cdots,\hat{x_j},\cdots,x_n}(\xi_i)]_{\g^\ast},x_j~\rangle.
\end{align*}

Then \eqref{eq:ii'} can be written by
\begin{equation}\label{eq:$1$-cocycle-equ}
  \langle~[\xi_1,\cdots,\xi_n]_{\g^\ast}, [x_1,\cdots,x_n]~\rangle
  =\sum\limits_{i,j=1}^{n}(-1)^{i+j-1}
  \langle~[\xi_1,\cdots,\xi_{i-1},\xi_{i+1},\cdots,\xi_n,\ad^\ast_{x_1,\cdots,\hat{x_j},\cdots,x_n}(\xi_i)]_{\g^\ast},x_j~\rangle.
\end{equation}
For example, in \cite{Bai}, the $1$-cocycle condition of Lie bialgebras can be written by
\begin{eqnarray*}
  \langle~ [\xi_1,\xi_2]_{\g^*},[x_1,x_2]\rangle
  &=&-\langle~[\xi_2,\ad_{x_2}^*(\xi_1)],x_1~\rangle
  +\langle~[\xi_1,\ad_{x_2}^*(\xi_2)],x_1~\rangle\\
  &+&\langle~[\xi_2,\ad_{x_1}^*(\xi_1)],x_2~\rangle
  -\langle~[\xi_1,\ad_{x_1}^*(\xi_2)],x_2~\rangle.
\end{eqnarray*}

 For any $n$-Lie bialgebra $(\g,\gamma)$, denotes $\ada:\wedge^{n-1}\g^*\rightarrow End(\g^*)$ the adjoint representation of the $n$-Lie algebra $\g^*$.
% Define a skew-symmetric linear map $\ad_{\xi_1,\cdots,\xi_{n-1}}^*:\wedge^{n-1} \g^*\rightarrow End (\g)$ satisfying
%\begin{equation}\label{eq:dual-rep-2}
%  \langle~\ad_{\xi_1,\cdots,\xi_{n-1}}(\xi_n),x~\rangle = -\langle~\xi_n,\ad^\ast_{\xi_1,\cdots,\xi_{n-1}}(x)~\rangle,\quad\forall~\xi_1, \cdots, \xi_n \in \g^\ast, x \in \g.
%\end{equation}
Then, by the Proposition \ref{pro:co-ad}, $(\g,\ada^*)$ is the coadjoint representation of $\g^\ast$ on $\g$ and we have
\begin{equation}\label{eq:$1$-cocycle-equ2}
  \langle~[\xi_1,\cdots,\xi_n]_{\g^\ast}, [x_1,\cdots,x_n]~\rangle =\sum\limits_{i,j=1}^{n}(-1)^{i+j}\langle~\ad^\ast_{x_1,\cdots,\hat{x_j},\cdots,x_n}(\xi_i)
  ,\ada^\ast_{\xi_1,\cdots,\hat{\xi_i},\cdots,\xi_n}(x_j)~\rangle.
\end{equation}
%\begin{rmk}
%There is a symmetry between $\g$ with its $n$-Lie bracket $[\cdot,\cdots,\cdot]$ and $\g^\ast$ with its $n$-Lie bracket $[\cdot,\cdots,\cdot]_{\g^\ast}$ defined by $\gamma$. Therefore, $\g$ and $\g^\ast$ play symmetric roles.
%\end{rmk}
\begin{pro}\label{pro:$n$-Lie-bialg-dual}
Let $(\g,\gamma)$ be an $n$-Lie bialgebra,  $[\cdot,\cdots,\cdot]_{\g^*}:\wedge^n\g^*\rightarrow \g^*$ be the $n$-Lie bracket induced by $\gamma$ and $\mu$ be the $n$-Lie bracket of $\g$. Then $(\g^\ast, ^t\mu)$ is an $n$-Lie bialgebra.
%which is called the {\bf dual of $n$-Lie bialgebra} $(\g,\gamma)$.

\end{pro}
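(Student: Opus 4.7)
The plan is to verify the two conditions of Definition \ref{defi:$n$-Lie-bialg} for the pair $(\g^\ast, {}^t\mu)$. Condition (i) requires that the transpose of ${}^t\mu$, read under the canonical identification $\g^{\ast\ast}\cong\g$ afforded by finite-dimensionality, defines an $n$-Lie bracket on $(\g^\ast)^\ast$; but this double transpose is precisely $\mu$, which is the $n$-Lie bracket of $\g$ by hypothesis, so (i) is immediate. All of the content therefore lies in condition (ii): one must show that ${}^t\mu:\g^\ast\to\otimes^n\g^\ast$ is a $1$-cocycle on the $n$-Lie algebra $(\g^\ast, [\cdot,\cdots,\cdot]_{\g^\ast})$ with values in $\otimes^n\g^\ast$, where $\g^\ast$ acts via the adjoint representation $\ada^{(n)}$.

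Unpacking \eqref{eq:ii'} with the roles of $\g$ and $\g^\ast$ interchanged, this reduces to the identity
$${}^t\mu([\xi_1,\cdots,\xi_n]_{\g^\ast}) = \sum_{i=1}^{n}(-1)^{n-i}\ada^{(n)}_{\xi_1,\cdots,\hat{\xi_i},\cdots,\xi_n}({}^t\mu(\xi_i))$$
for all $\xi_1,\cdots,\xi_n\in\g^\ast$. I would pair both sides with an arbitrary elementary tensor $x_1\otimes\cdots\otimes x_n\in\otimes^n\g$: on the left, \eqref{eq:gamma-tran} applied to $\mu$ converts the pairing into $\langle[\xi_1,\cdots,\xi_n]_{\g^\ast},[x_1,\cdots,x_n]\rangle$; on the right, \eqref{eq:dual-ten} together with the definition of $\ada^\ast$ recorded in Proposition \ref{pro:co-ad} turns the sum into
$$\sum_{i,j=1}^{n}(-1)^{i+j}\langle\ada^\ast_{\xi_1,\cdots,\hat{\xi_i},\cdots,\xi_n}(x_j),\ad^\ast_{x_1,\cdots,\hat{x_j},\cdots,x_n}(\xi_i)\rangle.$$
The resulting identity is nothing other than \eqref{eq:$1$-cocycle-equ2}, the self-dual reformulation of the $1$-cocycle condition for $\gamma$, and it holds by the assumption that $(\g,\gamma)$ is an $n$-Lie bialgebra.

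The conceptual reason the argument goes through is the manifest symmetry of \eqref{eq:$1$-cocycle-equ2} under the simultaneous swaps $\g\leftrightarrow\g^\ast$, $\mu\leftrightarrow\gamma$, and $\ad\leftrightarrow\ada$; the notion of $n$-Lie bialgebra is thereby revealed to be self-dual, so the roles of the algebra and its dual are perfectly interchangeable. The only real obstacle I anticipate is bookkeeping—tracking the double transpose and verifying that, after reindexing, the signs $(-1)^{i+j}$ and the hatted positions line up with \eqref{eq:$1$-cocycle-equ2} term-for-term—but this requires no computational input beyond the duality formulas already gathered in Section \ref{sec:n-Lie-bi}.
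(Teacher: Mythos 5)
Your proposal is correct and follows essentially the same route as the paper: both arguments hinge on the symmetric reformulation \eqref{eq:$1$-cocycle-equ2} of the $1$-cocycle condition together with the duality pairings \eqref{eq:mu-tran} and \eqref{eq:dual-ten}, the only difference being that you read the computation from the desired cocycle identity for $^t\mu$ down to \eqref{eq:$1$-cocycle-equ2} while the paper reads it in the opposite direction. Your explicit remark that condition (i) holds because the double transpose of $^t\mu$ is $\mu$ is a small point the paper leaves implicit, but it does not change the argument.
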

\begin{proof}
%Let $\mu:\otimes^n\g\rightarrow\g$ be the skew-symmetric $n$-Lie bracket of $\g$.
For all $\xi_1,\cdots,\xi_n\in \g^\ast$, $x_1,\cdots,x_n\in \g$, we have
\begin{equation}\label{eq:mu-tran}
  \langle~[\xi_1,\cdots,\xi_n]_{\g^\ast}, [x_1,\cdots,x_n]~\rangle=\langle~^t\mu([\xi_1,\cdots,\xi_n]_{\g^\ast}), x_1\otimes \cdots\otimes x_n~\rangle.
\end{equation}
By \eqref{eq:$1$-cocycle-equ2} and \eqref{eq:mu-tran}, we have
\begin{eqnarray*}
% \nonumber to remove numbering (before each equation)
  && \langle~^t\mu([\xi_1,\cdots,\xi_n]_{\g^\ast}), x_1\otimes \cdots\otimes x_n~\rangle \\
  &=& \sum\limits_{i,j=1}^{n}(-1)^{i+j+1}\langle~\xi_i,[x_1,\cdots,x_{j-1},\hat{x_j},x_{j+1},\cdots,x_n,
  \ada^\ast_{\xi_1,\cdots,\hat{\xi_i},\cdots,\xi_n}x_j]~\rangle\\
  &=& \sum\limits_{i,j=1}^{n} (-1)^{i+j+1}\langle~^t\mu(\xi_i),x_1\otimes\cdots\otimes x_{j-1}\otimes\hat{x_j}\otimes x_{j+1}\otimes\cdots\otimes x_n\otimes\ada^\ast_{\xi_1,\cdots,\hat{\xi_i},\cdots,\xi_n}x_j~\rangle\\
  &=&\sum\limits_{i,j=1}^{n} (-1)^{i+j+1}(-1)^{n-j}\langle~^t\mu(\xi_i),x_1\otimes\cdots\otimes x_{j-1}\otimes\ada^\ast_{\xi_1,\cdots,\hat{\xi_i},\cdots,\xi_n}x_j\otimes x_{j+1}\otimes\cdots\otimes x_n~\rangle\\
  &=&\sum\limits_{i=1}^{n}(-1)^{n+i-2i}\langle~(\sum_{j=1}^n (1\otimes^{j-1} \ada_{\xi_1,\cdots,\hat{\xi_i},\cdots,\xi_n}\otimes^{n-j}1))(^t\mu(\xi_i)),x_1\otimes\cdots\otimes {x_j}\otimes\cdots\otimes x_n~\rangle.
\end{eqnarray*}
Therefore,
\begin{equation*}
  (^t\mu)([\xi_1,\cdots,\xi_n]_{\g^\ast})=\sum\limits_{i=1}^{n}(-1)^{n-i}\ada^{(n)}_{\xi_1,\cdots,\hat{\xi_i},\cdots,\xi_n}(^t\mu(\xi_i)),
\end{equation*}
which implies that $^t\mu$ is a $1$-cocycle. Then $(\g^\ast, ^t\mu)$ is an $n$-Lie bialgebra.
\end{proof}
The $n$-Lie bialgebra $(\g^\ast, ^t\mu)$ is called the {\bf dual of $n$-Lie bialgebra} $(\g,\gamma)$.

\begin{rmk}
By the Proposition \ref{pro:$n$-Lie-bialg-dual}, each $n$-Lie
bialgebra $(\g,\gamma)$ has a dual $n$-Lie bialgebra $(\g^\ast, ^t\mu)$, and the dual of $(\g^\ast, ^t\mu)$ is $(\g,\gamma)$.
\end{rmk}
\section{The double of $n$-Lie bialgebras.}\label{sec:double}

In this section, we define operad matrices of $n$-Lie bialgebras, and generalize local cocycle $3$-Lie bialgebras introduced by \cite{Bai} to local cocycle $n$-Lie bialgebras. We also establish a one-to-one correspondence between the double of $n$-Lie bialgebras and Manin triples of $n$-Lie algebras.

\subsection{Local cocycle $n$-Lie bialgebras}
In this subsection, we first introduce the notion of operad matrix, which can be utilized to represent the $1$-cocycle condition of $n$-Lie bialgebras. Then, by using the operad matrix, we define $R_i$-operad $n$-Lie bialgebras and $C_j$-operad $n$-Lie bialgebras. We also demonstrate that $R_i$-operad matrices generalize the local cocycle 3-Lie bialgebras to local cocycle $n$-Lie bialgebras. Finally, we give the relationship between $n$-Lie bialgebras and local cocycle $n$-Lie bialgebras in the Proposition \ref{lem:lo-co-$n$-Lie-bialg}.

\begin{defi}
Let $(\g,\gamma)$ be an $n$-Lie bialgebra. For all $x_1,\cdots,x_n\in \g$, we define an {\bf operad matrix} $A=(h_{ij})_{n\times n}$, where $h_{ij}\in\otimes^n End(\g)$,
$$
h_{ij}=(-1)^{n-j}1\otimes^{i-1} \ad_{\hat{x_j}}\otimes^{n-i} 1, \quad 1\leq i, j\leq n,
$$
where $\ad_{\hat{x}_j}=\ad_{x_1,\cdots,\hat{x}_j,\cdots,x_n}$, $ \ad:\wedge^{n-1}\g \rightarrow End(\g)$ is the  adjoint representation of $\g$ and $1:\g\rightarrow\g$ is an identity map, $A$ is called the {\bf operad matrix of the $n$-Lie bialgebra} $(\g,\gamma)$.
\end{defi}

\begin{pro}\label{pro:$n$-Lie-alg-ope}
Let $(\g,\gamma)$ be an $n$-Lie bialgebra and $A$ be the operad matrix of $(\g,\gamma)$. Then $\gamma$ is a $1$-cocycle if and only if
\begin{equation}\label{eq:1-cocy}
  \gamma([x_1,\cdots,x_n])=(1,1,\cdots,1)_{1\times n}~A~(\gamma(x_1), \gamma(x_2), \cdots ,\gamma(x_n) )^T.
\end{equation}
\end{pro}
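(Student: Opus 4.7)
The plan is to unpack the right-hand side of \eqref{eq:1-cocy} as an explicit sum, and then match it term-by-term against the $1$-cocycle equation \eqref{eq:ii'}, which is the definition of $\gamma$ being a $1$-cocycle with values in $(\otimes^n\g,\ad^{(n)})$.

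First I would expand the matrix product. Since the row vector is $(1,1,\ldots,1)_{1\times n}$ and the column vector has $j$-th entry $\gamma(x_j)\in\otimes^n\g$, the scalar expression unfolds as
\begin{equation*}
(1,\ldots,1)\,A\,(\gamma(x_1),\ldots,\gamma(x_n))^T=\sum_{i=1}^n\sum_{j=1}^n h_{ij}\bigl(\gamma(x_j)\bigr)=\sum_{j=1}^n(-1)^{n-j}\sum_{i=1}^n\bigl(1^{\otimes(i-1)}\otimes\ad_{\hat{x}_j}\otimes 1^{\otimes(n-i)}\bigr)\bigl(\gamma(x_j)\bigr).
\end{equation*}

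Next I would recognize the inner sum over $i$. By the very definition of $\ad^{(n)}$ in \eqref{eq:ad-p} applied to $X=(x_1,\ldots,\hat{x}_j,\ldots,x_n)$ acting on the tensor $\gamma(x_j)\in\otimes^n\g$, one has
\begin{equation*}
\sum_{i=1}^n\bigl(1^{\otimes(i-1)}\otimes\ad_{x_1,\ldots,\hat{x}_j,\ldots,x_n}\otimes 1^{\otimes(n-i)}\bigr)\bigl(\gamma(x_j)\bigr)=\ad^{(n)}_{x_1,\ldots,\hat{x}_j,\ldots,x_n}\bigl(\gamma(x_j)\bigr).
\end{equation*}
Therefore the right-hand side of \eqref{eq:1-cocy} simplifies to
\begin{equation*}
\sum_{j=1}^n(-1)^{n-j}\,\ad^{(n)}_{x_1,\ldots,\hat{x}_j,\ldots,x_n}\bigl(\gamma(x_j)\bigr),
\end{equation*}
which is exactly the right-hand side of \eqref{eq:ii'}. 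Hence \eqref{eq:1-cocy} holds if and only if \eqref{eq:ii'} holds, and by the discussion preceding the proposition (condition (ii) of Definition \ref{defi:$n$-Lie-bialg} is equivalent to \eqref{eq:ii'}), this is equivalent to $\gamma$ being a $1$-cocycle on $\g$ with values in $(\otimes^n\g,\ad^{(n)})$.

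There is no serious obstacle here; the whole proposition is essentially a matter of translation between the matrix notation and the cocycle identity. The only mild care required is bookkeeping: making sure that the sign $(-1)^{n-j}$ in $h_{ij}$ is attached to the correct factor, that the row-index $i$ accounts for which tensor slot $\ad_{\hat{x}_j}$ acts on, and that the column-index $j$ labels which $\gamma(x_j)$ is being hit. Once the matrix product is written out, the equivalence is immediate.
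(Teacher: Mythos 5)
Your proof is correct and follows essentially the same route as the paper: both arguments amount to expanding the matrix product $(1,\ldots,1)\,A\,(\gamma(x_1),\ldots,\gamma(x_n))^T$ into the double sum $\sum_{i,j}(-1)^{n-j}\bigl(1^{\otimes(i-1)}\otimes\ad_{\hat{x}_j}\otimes 1^{\otimes(n-i)}\bigr)\gamma(x_j)$ and identifying the inner sum with $\ad^{(n)}_{x_1,\ldots,\hat{x}_j,\ldots,x_n}(\gamma(x_j))$ so that the identity becomes exactly the $1$-cocycle condition \eqref{eq:ii'}. The bookkeeping of signs and indices in your expansion is right, so nothing further is needed.
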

\begin{proof}
 By \eqref{eq:ii'}, for all $x_1,\cdots,x_n \in \g$,
 \begin{align*}
 % \nonumber to remove numbering (before each equation)
    \gamma([x_1,\cdots,x_n])
   %=~~& \sum\limits_{i=1}^{n}[x_1,\cdots,x_{i-1},\gamma(x_i),x_{i+1},\cdots,x_n] \\
%   =~~&\sum\limits_{i=1}^{n}(-1)^{n-i}[x_1,\cdots,x_{i-1},\hat{x_i},x_{i+1},\cdots,x_n,\gamma(x_i)]\\
   =&\sum\limits_{i,j=1}^{n}(-1)^{n-i}(1\otimes^{j-1}\ad_{x_1,\cdots,x_{i-1},\hat{x_{i}},x_{i+1},\cdots,x_n}\otimes^{n-j}1)\gamma(x_i)\\
   =& (1,1,\cdots,1)_{1\times n}A(\gamma(x_1), \gamma(x_2), \cdots ,\gamma(x_n))^T.
 \end{align*}
 This completes the proof.
\end{proof}

\begin{ex}
Let $(\g,\gamma)$ be a Lie bialgebra, then $\gamma$ is a $1$-cocycle if and only if for all $x_1,x_2\in\g$,
$$
\gamma([x_1,x_2])=(1,1)A(\gamma(x_1), \gamma(x_2))^T,
$$
where
$$
A=\left(
    \begin{array}{cc}
      -\ad_{x_2}\otimes1 & \ad_{x_1}\otimes1 \\
      -1\otimes\ad_{x_2} & 1\otimes\ad_{x_1} \\
    \end{array}
  \right).
$$
\end{ex}

\begin{ex}
Let $(\g,\gamma)$ be a 3-Lie bialgebra, then $\gamma$ is a $1$-cocycle if and only if for all $x_1,x_2,x_3\in\g$,
$$
\gamma([x_1,x_2,x_3])=(1,1,1)A(\gamma(x_1), \gamma(x_2), \gamma(x_3))^T,
$$
where
$$
A=\left(
    \begin{array}{ccc}
      \ad_{x_2,x_3}\otimes1\otimes1 & -\ad_{x_1,x_3}\otimes1\otimes1 & \ad_{x_1,x_2}\otimes1\otimes1 \\
      1\otimes\ad_{x_2,x_3}\otimes1 & -1\otimes\ad_{x_1,x_3}\otimes1 & 1\otimes\ad_{x_1,x_2}\otimes1 \\
      1\otimes1\otimes\ad_{x_2,x_3} & -1\otimes1\otimes\ad_{x_1,x_3} & 1\otimes1\otimes\ad_{x_1,x_2} \\
    \end{array}
  \right).
$$
\end{ex}

\begin{defi}\label{defi:$R_i$-operad}
Let $\g$ be an $n$-Lie algebra, $\gamma:\g\rightarrow\otimes^n\g$ be a linear map  such that $^t\gamma:\otimes^n\g^\ast\rightarrow\g^\ast$ defines an $n$-Lie algebra structure on $\g^\ast$.  If the operad matrix has the form
$$
A_{R_i}=\left(
          \begin{array}{cccc}
            0 & 0 & \cdots & 0 \\
            \vdots & \vdots & \ddots & \vdots \\
                        0 & 0 & \cdots & 0 \\
(-1)^{n-1}1\otimes^{i-1} \ad_{\hat{x}_1}\otimes^{n-i}1 & (-1)^{n-2}1\otimes^{i-1} \ad_{\hat{x}_2}\otimes^{n-i}1 & \cdots & 1\otimes^{i-1} \ad_{\hat{x}_n}\otimes^{n-i}1 \\
                        0 & 0 & \cdots & 0 \\
            \vdots & \vdots & \ddots & \vdots \\
            0 & 0 & \cdots & 0 \\
          \end{array}
        \right),
 $$
 %$h_{kj}=0,1\leq k\neq i\leq n, 1\leq j\leq n$ and
%$$
%h_{ij}=(-1)^{n-j}1\otimes^{i-1}1\otimes \ad_{x_1, \cdots, \hat{x_j}, \cdots, x_{n}}\otimes1\otimes^{n-i} 1, \quad 1\leq j\leq n,
%$$
where $1\leq i\leq n$,  and
 \begin{equation}\label{$R$}
   \gamma([x_1,\cdots,x_n])=(1,1,\cdots,1)_{1\times n}~A_{R_i}~(\gamma(x_1), \gamma(x_2), \cdots ,\gamma(x_n) )^T,
 \end{equation}
  then  $(\g,\gamma)$ is called
  the {\bf $R_i$-operad} $n$-Lie bialgebra. For convenience, we use $\gamma_R^i$ to represent the map $\gamma$ corresponding to the operad matrix $A_{R_i}$, and denote the $R_i$-operad $n$-Lie bialgebra as $(\g,\gamma_R^i)$.
\end{defi}

%\begin{defi}\label{defi:$R_i$-operad}
%Let $(\g,\gamma)$ be an $n$-Lie bialgebra, if the operad matrix of $(\g,\gamma)$ has the form
%$$
%A_{R_i}=\left(
%          \begin{array}{cccc}
%            0 & 0 & \cdots & 0 \\
%            \vdots & \vdots & \ddots & \vdots \\
%                        0 & 0 & \cdots & 0 \\
%(-1)^{n-1}1\otimes^{i-1} \ad_{\hat{x}_1}\otimes^{n-i}1 & (-1)^{n-2}1\otimes^{i-1} \ad_{\hat{x}_2}\otimes^{n-i}1 & \cdots & 1\otimes^{i-1} \ad_{\hat{x}_n}\otimes^{n-i}1 \\
%                        0 & 0 & \cdots & 0 \\
%            \vdots & \vdots & \ddots & \vdots \\
%            0 & 0 & \cdots & 0 \\
%          \end{array}
%        \right),
% $$
% %$h_{kj}=0,1\leq k\neq i\leq n, 1\leq j\leq n$ and
%%$$
%%h_{ij}=(-1)^{n-j}1\otimes^{i-1}1\otimes \ad_{x_1, \cdots, \hat{x_j}, \cdots, x_{n}}\otimes1\otimes^{n-i} 1, \quad 1\leq j\leq n,
%%$$
%where $1\leq i\leq n$,  and the linear map
% \begin{equation}\label{$R$}
%   \gamma([x_1,\cdots,x_n])=(1,1,\cdots,1)_{1\times n}~A_{R_i}~(\gamma(x_1), \gamma(x_2), \cdots ,\gamma(x_n) )^T,
% \end{equation}
%  such that $^t\gamma:\otimes^n\g^\ast\rightarrow\g^\ast$ defines an $n$-Lie algebra structure on $\g^\ast$, then  $(\g,\gamma)$ is called
%  the {\bf $R_i$-operad} $n$-Lie bialgebra, and denoted by $(\g,\gamma_R^i)$.
%\end{defi}

\begin{ex}
Let $\g$ be a Lie algebra, and $\gamma_R^1, \gamma_R^2: \g\rightarrow\g\otimes\g$ be two linear maps such that
\begin{eqnarray*}
% \nonumber to remove numbering (before each equation)
  \gamma_R^1([x_1,x_2]) &=& (1,1)A_{R_1}(\gamma_R^1(x_1), \gamma_R^1(x_2))^T, \\
  \gamma_R^2([x_1,x_2]) &=& (1,1)A_{R_2}(\gamma_R^2(x_1), \gamma_R^2(x_2))^T,
\end{eqnarray*}
where
$$
A_{R_1}=\left(
          \begin{array}{cc}
            -\ad_{x_2}\otimes1 & \ad_{x_1}\otimes1 \\
            0 & 0 \\
          \end{array}
        \right),
$$
and
$$
A_{R_2}=\left(
          \begin{array}{cc}
            0 & 0 \\
            -1\otimes\ad_{x_2} & 1\otimes\ad_{x_1} \\
          \end{array}
        \right).
$$
If $\gamma=\gamma_R^1+\gamma_R^2$ satisfies that $^t\gamma:\g^\ast\otimes\g^\ast\rightarrow\g^\ast$ defines a Lie algebra structure on $\g^\ast$, then $(\g,\gamma)$ is a local cocycle Lie bialgebra.
\end{ex}

\begin{rmk}\cite{Bai}
Let $(\g,\gamma)$ be a local cocycle Lie bialgebra. If the following compatibility condition holds:
\begin{equation}\label{eq:compatibility}
  (1\otimes\ad_{x_1})\gamma_1(x_2)+(\ad_{x_1}\otimes1)\gamma_2(x_2)-(1\otimes\ad_{x_2})\gamma_1(x_1)-(\ad_{x_2}\otimes1)\gamma_2(x_1)=0,
\end{equation}
then $(\g,\gamma)$ is a Lie bialgebra. Conversely, let $(\g,\gamma)$ be a Lie bialgebra. If $\gamma=\gamma_1+\gamma_2$ such that for any $x_1,x_2 \in \g$, \eqref{eq:compatibility} holds, then $(\g,\gamma)$ is a local cocycle Lie bialgebra.
\end{rmk}

Similarly, we can define $C_j$-operad $n$-Lie bialgebra.
\begin{defi}
Let $\g$ be an $n$-Lie algebra, $\gamma:\g\rightarrow\otimes^n\g$ be a linear map  such that $^t\gamma:\otimes^n\g^\ast\rightarrow\g^\ast$ defines an $n$-Lie algebra structure on $\g^\ast$. If the operad matrix has the form
$$
A_{C_j}=\left(
          \begin{array}{ccccccc}
            0 & \cdots & 0 & (-1)^{n-j} \ad_{\hat{x}_j}\otimes^{n-1}1
             & 0 & \cdots & 0 \\
            0 & \cdots & 0 & (-1)^{n-j}1\otimes \ad_{\hat{x}_j}\otimes^{n-2}1
            & 0 & \cdots & 0 \\
            \vdots & \ddots & \vdots & \vdots & \vdots
            & \ddots & \vdots \\
            0 & \cdots & 0 & (-1)^{n-j}1\otimes^{n-1} \ad_{\hat{x}_j}
            & 0 & \cdots & 0 \\
          \end{array}
        \right),
$$
where $1\leq j\leq n$,  and
\begin{equation}\label{$C$}
   \gamma([x_1,\cdots,x_n])=(1,1,\cdots,1)_{1\times n}~A_{C_j}~(\gamma(x_1), \gamma(x_2), \cdots ,\gamma(x_n) )^T,
\end{equation} then  $(\g,\gamma)$ is called
the {\bf $C_j$-operad} $n$-Lie bialgebra. We use $\gamma_C^j$ to represent the map $\gamma$ corresponding to the operad matrix $A_{C_j}$, and denote the $C_j$-operad $n$-Lie bialgebra as $(\g,\gamma_C^j)$.
\end{defi}

\begin{ex}
Let $\g$ be a Lie algebra, and $\gamma_C^1, \gamma_C^2: \g\rightarrow\g\otimes\g$ be two linear maps such that
\begin{eqnarray*}
% \nonumber to remove numbering (before each equation)
  \gamma_C^1([x_1,x_2]) &=& (1,1)A_{C_1}(\gamma_C^1(x_1), \gamma_C^1(x_2))^T, \\
  \gamma_C^2([x_1,x_2]) &=& (1,1)A_{C_2}(\gamma_C^2(x_1), \gamma_C^2(x_2))^T,
\end{eqnarray*}
where
$$
A_{C_1}=\left(
          \begin{array}{cc}
            -\ad_{x_2}\otimes1 & 0 \\
            -1\otimes\ad_{x_2} & 0 \\
          \end{array}
        \right),
$$
and
$$
A_{C_2}=\left(
          \begin{array}{cc}
            0 & \ad_{x_1}\otimes1 \\
            0 & 1\otimes\ad_{x_1} \\
          \end{array}
        \right).
$$
If $\gamma_C^1$ (or $\gamma_C^2$) satisfies $^t\gamma_C^1$ (or $^t\gamma_C^2$) $:\g^\ast\otimes\g^\ast\rightarrow\g^\ast$ defining a Lie algebra structure on $\g^\ast$, then $(\g,\gamma_C^1)$ or $((\g,\gamma_C^2))$ is called a $C_1$ ( or $C_2$)-operad Lie bialgebra.
\end{ex}

\begin{rmk}
Let $(\g,\gamma_C^1)$ be a $C_1$-operad Lie bialgebra, and $(\g,\gamma_C^2)$ be a $C_2$-operad Lie bialgebra. If $\gamma_C^1$ and $\gamma_C^2$ satisfy
\begin{equation*}
  (\ad_{x_1}\otimes1)\gamma_1(x_2)+(1\otimes\ad_{x_1})\gamma_1(x_2)-(\ad_{x_2}\otimes1)\gamma_2(x_1)-(1\otimes\ad_{x_2})\gamma_2(x_1)=0,
\end{equation*}
then $(\g,\gamma=\gamma_C^1+\gamma_C^2)$ is a Lie bialgebra.
\end{rmk}

\begin{defi}\cite{Bai}
Let $\g$ be a $3$-Lie algebra and $\gamma_1,\gamma_2,\gamma_3:\g\rightarrow \g\otimes\g\otimes\g$ be linear maps. If $\gamma=\gamma_1+\gamma_2+\gamma_3$ satisfies that $^t\gamma:\g^\ast\otimes\g^\ast\otimes\g^\ast\rightarrow\g^\ast$ defines a $3$-Lie algebra structure on $\g^\ast$ and for any $x_1$, $x_2$, $x_3 \in \g$,
\begin{eqnarray*}
% \nonumber to remove numbering (before each equation)
  \gamma_1([x_1,x_2,x_3]) &=& (\ad_{x_2,x_3}\otimes1\otimes1)\gamma_1(x_1)-(\ad_{x_1,x_3}\otimes1\otimes1)\gamma_1(x_2)+(\ad_{x_1,x_2}\otimes1\otimes1)\gamma_1(x_3), \\
  \gamma_2([x_1,x_2,x_3]) &=& (1\otimes\ad_{x_2,x_3}\otimes1)\gamma_2(x_1)-(1\otimes\ad_{x_1,x_3}\otimes1)\gamma_2(x_2)+(1\otimes\ad_{x_1,x_2}\otimes1)\gamma_2(x_3), \\
  \gamma_3([x_1,x_2,x_3]) &=& (1\otimes1\otimes\ad_{x_2,x_3})\gamma_3(x_1)-(1\otimes1\otimes\ad_{x_1,x_3})\gamma_3(x_2)+(1\otimes1\otimes\ad_{x_1,x_2})\gamma_3(x_3),
\end{eqnarray*}
then $(\g,\gamma)$ is a local cocycle $3$-Lie bialgebra.
\end{defi}

\begin{pro}\label{defi:local-coc}
Let $\g$ be an $n$-Lie algebra and $\gamma_1,\gamma_2,\cdots,\gamma_n:\g\rightarrow\otimes^n\g$ be linear maps such that
\begin{equation}\label{eq:gamma-12n-cc}
  \gamma_i = \gamma_{R}^i,\quad i=1,2,\cdots,n.
\end{equation}
If the linear map $\gamma=\gamma_1+\gamma_2+\cdots+\gamma_n$ satisfies that $^t\gamma:\otimes^n\g^\ast\rightarrow\g^\ast$ defines an $n$-Lie algebra structure on $\g^\ast$,
then the pair $(\g,\gamma)$ is a {\bf local cocycle $n$-Lie bialgebra}.
\end{pro}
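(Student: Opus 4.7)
The plan is to unpack what the condition $\gamma_i=\gamma_R^i$ (equation \eqref{$R$} for the matrix $A_{R_i}$) says in scalar form, and then check that, when $n=3$, this reproduces verbatim the three identities used just above to define local cocycle $3$-Lie bialgebras. Once that consistency is established, the proposition is simply naming the natural index-$n$ analogue of that definition, together with the assumption that $^t\gamma$ defines an $n$-Lie bracket on $\g^\ast$.

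First I would read off \eqref{$R$} for $A_{R_i}$. All entries of $A_{R_i}$ vanish outside its $i$-th row, and its $i$-th row coincides with the $i$-th row of the full operad matrix $A$ from Proposition \ref{pro:$n$-Lie-alg-ope}, whose $(i,j)$-entry is $h_{ij}=(-1)^{n-j}\,1^{\otimes(i-1)}\otimes \ad_{x_1,\cdots,\hat{x}_j,\cdots,x_n}\otimes 1^{\otimes(n-i)}$. Thus the matrix product $(1,\ldots,1)\,A_{R_i}\,(\gamma_i(x_1),\ldots,\gamma_i(x_n))^{T}$ collapses to a single row sum and $\gamma_i=\gamma_R^i$ becomes
\[
\gamma_i([x_1,\cdots,x_n]) \;=\; \sum_{j=1}^{n}(-1)^{n-j}\bigl(1^{\otimes(i-1)}\otimes\ad_{x_1,\cdots,\hat{x}_j,\cdots,x_n}\otimes 1^{\otimes(n-i)}\bigr)\gamma_i(x_j).
\]

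Second, I would specialize this formula to $n=3$. The sign pattern from $(-1)^{3-j}$ is $+,-,+$ in $j$, while the slot of $\ad_{x_1,\cdots,\hat{x}_j,\cdots,x_n}$ is the first, middle, or last tensor factor according to whether $i=1,2,$ or $3$. Comparing with the three identities listed just above the proposition, the instance $i=1$ reproduces the first equation, $i=2$ the second, and $i=3$ the third. Together with the hypothesis that $^t\gamma$ defines a $3$-Lie algebra structure on $\g^\ast$, this recovers exactly the known definition of local cocycle $3$-Lie bialgebra.

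Third, I would conclude that for general $n$ the proposition simply names $(\g,\gamma)$ a local cocycle $n$-Lie bialgebra because its hypotheses are, by construction, the direct index-$n$ analogues of those in the $3$-Lie definition. I do not foresee a substantive obstacle: the content is combinatorial and the only place requiring care is bookkeeping the sign $(-1)^{n-j}$ and the tensor slot $i$ in which $\ad_{x_1,\cdots,\hat{x}_j,\cdots,x_n}$ appears, so that the matrix multiplication with $A_{R_i}$ from Definition \ref{defi:$R_i$-operad} lines up with the conventions of Proposition \ref{pro:$n$-Lie-alg-ope} and lands on the nose at $n=3$.
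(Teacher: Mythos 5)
Your proposal is correct and follows essentially the same route as the paper: the paper's proof consists precisely of collapsing the matrix product $(1,\ldots,1)\,A_{R_i}\,(\gamma_i(x_1),\ldots,\gamma_i(x_n))^{T}$ to the single row sum $\gamma_i([x_1,\cdots,x_n])=\sum_{j=1}^{n}(-1)^{n-j}\bigl(1\otimes^{i-1}\ad_{\hat{x}_j}\otimes^{n-i}1\bigr)\gamma_i(x_j)$ and observing that, together with the hypothesis on $^t\gamma$, this is exactly the $n$-ary analogue of the local cocycle $3$-Lie bialgebra conditions. Your additional $n=3$ consistency check is a harmless elaboration of what the paper leaves implicit.
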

\begin{proof}
For any $x_1,\cdots,x_n \in \g$, \eqref{eq:gamma-12n-cc} can be rewritten as,
$$
\gamma_i([x_1,\cdots,x_n])=\sum\limits_{j=1}^{n}(-1)^{n-j}\bigg(1\otimes^{i-1} \ad_{\hat{x_j}}\otimes^{n-i} 1\bigg)\gamma_i(x_j), \quad i=1,2,\cdots,n,
$$
where $\ad_{\hat{x}_j}=\ad_{x_1,\cdots,\hat{x}_j,\cdots,x_n}$, hence the conclusion holds.
\end{proof}

\begin{pro}\label{lem:lo-co-$n$-Lie-bialg}
Let $(\g,\gamma)$ be a local cocycle $n$-Lie bialgebra satisfying that
\begin{equation}
  \sum\limits_{i,j,k=1,i\neq k}^n(-1)^{n-j}(1\otimes^{k-1}\ad_{\hat{x_j}}\otimes^{n-k}1)\gamma_{i}(x_j)=0,
\end{equation}
where $\ad_{\hat{x}_j}=\ad_{x_1,\cdots,\hat{x}_j,\cdots,x_n}$, then the pair $(\g,\gamma)$ is an $n$-Lie bialgebra.
\end{pro}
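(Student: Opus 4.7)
The goal is to verify that, under the extra compatibility hypothesis, the map $\gamma=\gamma_1+\cdots+\gamma_n$ satisfies both conditions of Definition \ref{defi:$n$-Lie-bialg}. Condition (i), namely that $^t\gamma$ endows $\g^\ast$ with an $n$-Lie bracket, is already built into the definition of a local cocycle $n$-Lie bialgebra (Proposition \ref{defi:local-coc}), so the whole content is to verify the $1$-cocycle condition (ii). By Proposition \ref{pro:$n$-Lie-alg-ope}, condition (ii) is equivalent to
\[
\gamma([x_1,\cdots,x_n])=\sum_{j,k=1}^{n}(-1)^{n-j}\bigl(1^{\otimes(k-1)}\otimes\ad_{\hat{x_j}}\otimes 1^{\otimes(n-k)}\bigr)\gamma(x_j),
\]
so the plan is to prove this equality directly by expanding $\gamma=\sum_i \gamma_i$ on both sides.

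On the left, using $\gamma_i=\gamma_R^i$ from \eqref{eq:gamma-12n-cc}, I would write
\[
\gamma([x_1,\cdots,x_n])=\sum_{i=1}^{n}\gamma_i([x_1,\cdots,x_n])
=\sum_{i,j=1}^{n}(-1)^{n-j}\bigl(1^{\otimes(i-1)}\otimes\ad_{\hat{x_j}}\otimes 1^{\otimes(n-i)}\bigr)\gamma_i(x_j).
\]
On the right, after inserting $\gamma(x_j)=\sum_{i}\gamma_i(x_j)$, I would separate the triple sum into the diagonal part $i=k$ and the off-diagonal part $i\neq k$:
\[
\sum_{i,j,k=1}^{n}(-1)^{n-j}\bigl(1^{\otimes(k-1)}\otimes\ad_{\hat{x_j}}\otimes 1^{\otimes(n-k)}\bigr)\gamma_i(x_j)
=\sum_{\substack{i,j\\ k=i}}(\cdots)+\sum_{\substack{i,j,k\\ i\neq k}}(\cdots).
\]
The first (diagonal) summand is exactly the left-hand expression for $\gamma([x_1,\cdots,x_n])$ just computed, while the second (off-diagonal) summand is precisely the hypothesis
\[
\sum_{\substack{i,j,k=1\\ i\neq k}}^{n}(-1)^{n-j}\bigl(1^{\otimes(k-1)}\otimes\ad_{\hat{x_j}}\otimes 1^{\otimes(n-k)}\bigr)\gamma_i(x_j)=0.
\]
Therefore the two sides agree and $\gamma$ is a $1$-cocycle, completing the proof.

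There is no real obstacle here beyond bookkeeping: all the analytic content sits in the operad matrix reformulation of the cocycle condition (Proposition \ref{pro:$n$-Lie-alg-ope}) and in the defining identities \eqref{eq:gamma-12n-cc} of a local cocycle $n$-Lie bialgebra. The only care needed is to keep the three indices $i$ (which $\gamma_i$), $j$ (which argument $x_j$), $k$ (which tensor slot carries $\ad$) straight and to identify the hypothesis with exactly the $i\neq k$ portion of the triple sum. The converse direction (an $n$-Lie bialgebra with a decomposition $\gamma=\sum\gamma_i$ of the stated type is a local cocycle $n$-Lie bialgebra) would follow from the same identity read in reverse, in parallel with the Lie case recalled in the remark after \eqref{eq:compatibility}.
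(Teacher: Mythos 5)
Your proof is correct and is exactly the "direct computation" the paper's one-line proof alludes to: split the triple sum in the operad-matrix form of the $1$-cocycle condition into the diagonal part $i=k$ (which reproduces $\sum_i\gamma_i([x_1,\cdots,x_n])=\gamma([x_1,\cdots,x_n])$ via the $R_i$-operad identities) and the off-diagonal part $i\neq k$ (which is the hypothesis). Your write-up simply supplies the bookkeeping the paper omits, and the index handling is accurate.
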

\begin{proof}
By \eqref{eq:ii'} and the Proposition \ref{defi:local-coc}, we can complete the proof directly.
\end{proof}

%\begin{ex}
%Let $(\g,\gamma)$ be a local cocycle Lie bialgebra, and for any $x_1,x_2 \in \g$,
%\begin{eqnarray*}
%% \nonumber to remove numbering (before each equation)
%  && \sum\limits_{j=1}^2(-1)^{2-j}(\ad_{\hat{x_j}}\otimes1)\gamma_{2}(x_j)
%  +  \sum\limits_{j=1}^2(-1)^{2-j}(1\otimes\ad_{\hat{x_j}})\gamma_{1}(x_j)\\
%  =&&-(\ad_{x_2}\otimes1)\gamma_{2}(x_1)+(\ad_{x_1}\otimes1)\gamma_{2}(x_2)
%  -(1\otimes\ad_{x_2})\gamma_{1}(x_1)+(1\otimes\ad_{x_1})\gamma_{1}(x_2) \\
%  =&&0,
%\end{eqnarray*}
%then the pair $(\g,\gamma)$ is a Lie bialgebra.
%\end{ex}

\begin{ex}
Let $(\g,\gamma)$ be a local cocycle $3$-Lie bialgebra and for any $x_1,x_2,x_3 \in \g$,
\begin{align*}
% \nonumber to remove numbering (before each equation)
  & \sum\limits_{j=1}^3(-1)^{3-j}(\ad_{\hat{x_j}}\otimes1\otimes1)\gamma_{2}(x_j)
  +  \sum\limits_{j=1}^3(-1)^{3-j}(\ad_{\hat{x_j}}\otimes1\otimes1)\gamma_{3}(x_j)\\
  +&\sum\limits_{j=1}^3(-1)^{3-j}(1\otimes\ad_{\hat{x_j}}\otimes1)\gamma_{1}(x_j)
  +  \sum\limits_{j=1}^3(-1)^{3-j}(1\otimes\ad_{\hat{x_j}}\otimes1)\gamma_{3}(x_j)\\
  +&\sum\limits_{j=1}^3(-1)^{3-j}(1\otimes1\otimes\ad_{\hat{x_j}})\gamma_{1}(x_j)
  +  \sum\limits_{j=1}^3(-1)^{3-j}(1\otimes1\otimes\ad_{\hat{x_j}})\gamma_{2}(x_j)\\
  =&(\ad_{x_2,x_3}\otimes1\otimes1)\gamma_{2}(x_1)-(\ad_{x_1,x_3}\otimes1\otimes1)\gamma_{2}(x_2)+(\ad_{x_1,x_2}\otimes1\otimes1)\gamma_{2}(x_3)\\
  +&(\ad_{x_2,x_3}\otimes1\otimes1)\gamma_{3}(x_1)-(\ad_{x_1,x_3}\otimes1\otimes1)\gamma_{3}(x_2)+(\ad_{x_1,x_2}\otimes1\otimes1)\gamma_{3}(x_3)\\
  +&(1\otimes\ad_{x_2,x_3}\otimes1)\gamma_{1}(x_1)-(1\otimes\ad_{x_1,x_3}\otimes1)\gamma_{1}(x_2)+(1\otimes\ad_{x_1,x_2}\otimes1)\gamma_{1}(x_3)\\
  +&(1\otimes\ad_{x_2,x_3}\otimes1)\gamma_{3}(x_1)-(1\otimes\ad_{x_1,x_3}\otimes1)\gamma_{3}(x_2)+(1\otimes\ad_{x_1,x_2}\otimes1)\gamma_{3}(x_3)\\
  +&(1\otimes1\otimes\ad_{x_2,x_3})\gamma_{1}(x_1)-(1\otimes1\otimes\ad_{x_1,x_3})\gamma_{1}(x_2)+(1\otimes1\otimes\ad_{x_1,x_2})\gamma_{1}(x_3)\\
  +&(1\otimes1\otimes\ad_{x_2,x_3})\gamma_{2}(x_1)-(1\otimes1\otimes\ad_{x_1,x_3})\gamma_{2}(x_2)+(1\otimes1\otimes\ad_{x_1,x_2})\gamma_{2}(x_3)\\
  =&0,
\end{align*}
then the pair $(\g,\gamma)$ is a $3$-Lie bialgebra.
\end{ex}

\begin{rmk}
Let $(\g,\gamma)$ be a local cocycle $n$-Lie bialgebra, and $\mu$ is the $n$-Lie bracket of $\g$, but we cannot obtain that $(\g^\ast, ^t\mu)$ is also a local cocycle $n$-Lie bialgebra, where $^t\gamma$ is the $n$-Lie bracket of $\g^\ast$. For example, the dual of a local cocycle Lie bialgebra is not a local cocycle Lie bialgebra. Therefore, the dual of a local cocycle $n$-Lie bialgebra may not be a local cocycle $n$-Lie bialgebra.
\end{rmk}

\subsection{Manin triples of $n$-Lie algebras and the double of $n$-Lie bialgebras.}

We now show that there is a one-to-one correspondence between the double of $n$-Lie bialgebras and the Manin triples of $n$-Lie algebras.

\begin{defi}\cite{Bai-R-2}
A {\bf metric $n$-Lie algebra} is a triple $\big(\g, [\cdot,\cdots,\cdot], (\cdot,\cdot)\big)$, where $(\g,[\cdot,\cdots,\cdot])$ is an $n$-Lie algebra with the $n$-Lie bracket $[\cdot,\cdots,\cdot]$,  and $(\cdot,\cdot):\g\times\g\rightarrow \mathrm{F}$ is a non-degenerate symmetric bilinear form  satisfies the invariant condition:
\begin{equation}\label{eq:$n$-Lie-inva}
  ([x_1,\cdots,x_{n-1},x_n],t)+(x_n,[x_1,\cdots,x_{n-1},t])=0,\quad\forall~ x_1,\cdots,x_n,t \in \g.
\end{equation}
\end{defi}

\begin{defi}\label{defi:Manin-tri}
A {\bf Manin triple of $n$-Lie algebras} is a triple
$((\g, [\cdot,\cdots,\cdot], (\cdot,\cdot)), \g_1, \g_2)$, where $\big(\g, [\cdot,\cdots,\cdot], (\cdot,\cdot)\big)$ is a metric $n$-Lie algebra such that
 \begin{enumerate}[(i)]
   \item $\g_1$, $\g_2$ are isotropic $n$-Lie subalgebras of $\g$, such that $\g=\g_1\oplus\g_2$ as vector spaces.
   \item For all $x_1,\cdots,x_{n-1}\in \g_1$, $y_1,\cdots,y_{n-1}\in \g_2$ and $x\in \g$, the following conditions hold:
\begin{eqnarray}
% \nonumber to remove numbering (before each equation)
   &&(y_2,[x_1,\cdots,x_{n-1},y_1])=0,\label{eq:inv-1}\\
   &&(x_2,[y_1,\cdots,y_{n-1},x_1])=0,\label{eq:inv-2}
\end{eqnarray}
and
\begin{equation}\label{eq:inv-3}
\left\{
\begin{split}
&(x,[x_1,\cdots,x_{n-2},y_1,y_2])=0,\\
   &\quad\quad\quad\quad\quad\vdots \\
   &(x,[x_1,x_2,y_1,\cdots,y_{n-2}])=0.
\end{split}
\right.
\end{equation}
\end{enumerate}
\end{defi}
%Obviously, suppose that $n=3$, then Manin triple of $n$-Lie algebras is
%Let  $(\g,\gamma_{R}^1)$ be a $R_1$-operad $n$-Lie bialgebra, %where $\gamma_{\g}:\g\rightarrow\otimes^n\g$ is a skew-symmetric linear map  such that $[\cdot,\cdots,\cdot]_{\g^\ast}:\otimes^n\g^\ast\rightarrow\g^\ast$ defines an $n$-Lie bracket on $\g^\ast$.
%i.e. for all $x_1,\cdots,x_n \in \g,\xi_1,\cdots,\xi_n \in \g^\ast$,
%\begin{equation*}
%  \langle~\gamma_{\g}[x_1,\cdots,x_n],\xi\otimes\cdots\otimes\xi_n~\rangle=\langle~[x_1,\cdots,x_n],[\xi\otimes\cdots\otimes\xi_n]_{\g^\ast}~\rangle,
%\end{equation*}
%Let $(\g,[\cdot,\cdots,\cdot],\gamma_{\g})$ be an $n$-Lie bialgebra and let $(\g^*,[\cdot,\cdots,\cdot]_{\g^*},\gamma_{\g^*})$  be its dual $n$-Lie bialgebra.
 In particular, when $n = 2$, i.e., for a metric Lie algebra, we
obtain the usual notion of Manin triple of Lie algebras which is well known in the literature \cite{Drinfeld2}. In \cite{Bai}, for the pseudo-metric 3-Lie algebras, the authors considered Manin triple of 3-Lie algebras,
which is a special case of the Definition \ref{defi:Manin-tri} when $n=3$.

\begin{defi}
Let $(\g,[\cdot,\cdots,\cdot])$ be an $n$-Lie algebra and $\gamma:\g\rightarrow\otimes^n\g$ be a linear map.  If $\gamma$ satisfies
\begin{equation}\label{eq:n-centroid}
  \gamma([x_1,x_2,\cdots,x_n]) = [\gamma(x_1),x_2,\cdots,x_n],\quad \forall~ x_1,\cdots,x_n\in\g,
\end{equation}
then we call $\gamma$ a {\bf centroid} map.
\end{defi}

\begin{defi}
Let $(\g,[\cdot,\cdots,\cdot])$ be an $n$-Lie algebra and $\gamma:\g\rightarrow\otimes^n\g$ be a linear map. For all $x_1,\cdots,x_n\in\g$, if $\gamma$ satisfies
\begin{eqnarray}
%\gamma_{\g} &=& \gamma_{R}^1, \label{eq:n-local-co}\\
(1\otimes^{j-1}\ad_{x_2,x_3,\cdots,x_n}\otimes^{n-j}1&+&
1\otimes^{n-1}\ad_{x_2,x_3,\cdots,x_n})\gamma(x_1)=0,\label{eq:1}\\
(1\otimes^{i-1}\ad_{x_2,x_3,\cdots,x_{n-1},x_n}\otimes^{n-i}1)\gamma(x_1)
&+&(1\otimes^{k-1}\ad_{x_2,x_3,\cdots,x_{n-1},x_1}\otimes^{n-k}1)\gamma(x_n)=0,\label{eq:2}
\end{eqnarray}
where $x_i\in \g$, $1\leq j \leq n-1$, $1\leq i,k\leq n, ~i\neq k$,
then we call $\gamma$ a {\bf local operad map}. If $\gamma$ is both a centroid map and a local operad map, then we call $\gamma$ a {\bf local operad centroid} map.
\end{defi}

\begin{pro}\label{pro:double-Lie}
%Let  $(\g,[\cdot,\cdots,\cdot],\gamma_{\g})$ be a local centroid $R_1$-operad $n$-Lie bialgebra and define a linear map $[\cdot,\cdots,\cdot]_\frkd:\wedge^n \frkd\rightarrow \frkd$ by \eqref{eq:$n$-Lie-on-d}.
Let  $(\g,\gamma_{R}^1)$ be an $R_1$-operad $n$-Lie bialgebra and $\gamma_{R}^1$ be a local operad centroid map.
Set $\frkd=\g\oplus\g^\ast$, and $[\cdot,\cdots,\cdot]_\frkd:\otimes^n \frkd\rightarrow \frkd$  is a linear map, for all $x_1,\cdots,x_n\in \g, \xi_1,\cdots,\xi_n\in\g^*$ satisfying
\begin{align}\label{eq:$n$-Lie-on-d}
% \nonumber to remove numbering (before each equation)
  [x_1+\xi_1,x_2+\xi_2,\cdots,x_n+\xi_n]_\frkd &=
  [x_1,x_2,\cdots,x_n]+\sum\limits_{i=1}^{n}(-1)^{n-i}\ad^\ast_{x_1,\cdots,x_{i-1},\hat{x_i},x_{i+1},\cdots,x_n}\xi_i \nonumber \\ &+[\xi_1,\xi_2,\cdots,\xi_n]_{\g^\ast}+\sum\limits_{i=1}^{n}(-1)^{n-i}\ada^\ast_{\xi_1,\cdots,\xi_{i-1},\hat{\xi_i},\xi_{i+1},\cdots,\xi_n}x_i,
\end{align}
%where for all $x\in \g, \xi\in\g^*$,
where $(\g^*,\ad^*)$ is the coadjoint representation of $\g$ on $\g^*$ and $(\g,\ada^*)$ is the coadjoint representation of $\g^*$ on $\g$.
%i.e.,
%\begin{align}
%\langle~\ad^\ast_{x_1,\cdots,x_{n-1}}(\xi),x_n~\rangle&=-\langle~\xi,[x_1,\cdots,x_{n-1},x_n]~\rangle,
%\label{eq:ad*g}\\
%\langle~\ada^\ast_{\xi_1,\cdots,\xi_{n-1}}(x),\xi_n~\rangle&=-\langle~x,[\xi_1,\cdots,\xi_{n-1},\xi_n]_{\g^*}~\rangle.
%\label{eq:ad*g*}
%\end{align}
Then  $(\frkd,[\cdot,\cdots,\cdot]_\frkd)$ is an $n$-Lie algebra.
\end{pro}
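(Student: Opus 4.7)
The plan is to verify the two defining axioms of an $n$-Lie algebra on $\frkd = \g \oplus \g^*$: skew-symmetry of $[\cdot,\cdots,\cdot]_\frkd$ as defined by \eqref{eq:$n$-Lie-on-d}, and the Filippov-Jacobi identity \eqref{eq:FJi}. Skew-symmetry is the easier half: the $\g$-component $[x_1,\cdots,x_n]$ and the $\g^*$-component $[\xi_1,\cdots,\xi_n]_{\g^*}$ are each already alternating by hypothesis, while the mixed contributions $\sum_{i}(-1)^{n-i}\ad^\ast_{x_1,\cdots,\hat{x_i},\cdots,x_n}\xi_i$ and $\sum_{i}(-1)^{n-i}\ada^\ast_{\xi_1,\cdots,\hat{\xi_i},\cdots,\xi_n} x_i$ are alternating in the inputs $u_j = x_j+\xi_j$, because the signs $(-1)^{n-i}$ precisely compensate for the transposition of any pair of arguments, by the same bookkeeping one uses for the antisymmetrization in \eqref{eq:ii'}.

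For the Filippov-Jacobi identity I would fix $2n-1$ inputs $u_1, \cdots, u_{n-1}$ and $v_1, \cdots, v_n$ in $\frkd$, decompose each $u_i$ and $v_j$ into its $\g$- and $\g^*$-components, and expand both sides of \eqref{eq:FJi} by multilinearity. This reduces the verification to each ``pure'' configuration in which every input lies entirely in $\g$ or entirely in $\g^*$. The two extreme configurations, all inputs in $\g$ and all inputs in $\g^*$, reduce directly to the Filippov-Jacobi identities of $\g$ and of $\g^*$ (the latter holds by hypothesis since $^t\gamma_R^1$ defines an $n$-Lie bracket on $\g^*$). The ``one $\g^*$-entry'' and ``one $\g$-entry'' configurations collapse to the representation identities \eqref{eq:rep-1}--\eqref{eq:rep-2} for $(\g^*,\ad^*)$ and $(\g,\ada^*)$ respectively, via Proposition \ref{pro:co-ad}.

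The main obstacle is the remaining mixed configurations, where both $\g$- and $\g^*$-entries appear on both sides of \eqref{eq:FJi}. In each such case the $\g$-component and $\g^*$-component of the two sides must be matched separately, and this matching produces cross-terms in which $\ad$ and $\ada^*$ (or $\ada$ and $\ad^*$) are composed with $\gamma_R^1$ through its definition and the pairing \eqref{eq:gamma-tran}. The three hypotheses on $\gamma_R^1$ enter precisely here: the $R_1$-operad condition \eqref{$R$}, rewritten via \eqref{eq:gamma-tran}, governs how $[\xi_1,\cdots,\xi_n]_{\g^*}$ pairs with $[x_1,\cdots,x_n]$ and absorbs the ``$1$-cocycle'' cross-terms; the centroid condition \eqref{eq:n-centroid} lets one pull $\gamma_R^1$ through an inner $\ad$-action; and the local operad identities \eqref{eq:1}, \eqref{eq:2} force the remaining adjoint compositions acting in distinct tensor slots to cancel pairwise.

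I would therefore carry out the verification in the following order: skew-symmetry; the two pure cases; the two single-mixed-entry cases via Proposition \ref{pro:co-ad}; and finally the genuinely mixed cases, combined into manageable groups by the symmetry already established. The last step is where I expect essentially all the work to lie, because each mixed configuration splits into several families of terms distinguished by which tensor slot $\ad$ or $\ada$ acts in, and one must track the signs $(-1)^{n-i}$, $(-1)^{n-j}$ carefully before invoking \eqref{eq:n-centroid}, \eqref{eq:1}, \eqref{eq:2} to collapse the result. The asymmetric shape of the $R_1$-operad matrix (only its bottom row is nonzero) is precisely what makes this triple of hypotheses sufficient; a different $R_i$ would force a correspondingly shifted local operad condition.
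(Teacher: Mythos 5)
Your plan is essentially the paper's own proof: the paper likewise expands the Filippov--Jacobi identity on $\frkd$ multilinearly into configurations (grouped as terms $(A_1)$--$(A_6)$, $(B_1)$--$(B_6)$, $(C_1)$, $(C_2)$), disposes of the pure and single-mixed configurations via the Filippov--Jacobi identities of $\g$ and $\g^*$ and Proposition \ref{pro:co-ad}, and handles the genuinely mixed configurations exactly as you allocate them --- the $R_1$-operad condition for the $(A_2)$/$(B_2)$-type terms, the centroid condition \eqref{eq:n-centroid} for the $(A_3)$/$(B_3)$-type terms, and the local operad identities \eqref{eq:1}, \eqref{eq:2} for $(A_4)$, $(C_1)$ and their duals. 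The sign-chasing you defer is where the paper's proof spends its length, but your assignment of hypotheses to cases matches the paper's and the plan would go through.
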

\begin{proof}
It is obvious that $[\cdot,\cdots,\cdot]_\frkd$ is skew-symmetric. Now we prove that $[\cdot,\cdots,\cdot]_\frkd$ satisfies the Filippov-Jacobi identity. For all $x_1,\cdots,x_{n-1},~y_1,\cdots,y_{n} \in \g$, $\xi_1,\cdots,\xi_{n-1},~\eta_1,\cdots,\eta_{n} \in \g^{\ast}$, by \eqref{eq:$n$-Lie-on-d}, we have
\begin{eqnarray*}
% \nonumber to remove numbering (before each equation)
  && \big[x_1+\xi_1,\cdots,x_{n-1}+\xi_{n-1},[y_1+\eta_1,\cdots,y_n+\eta_n]_{\frkd}\big]_{\frkd}  \\
  &-& \sum\limits_{i=1}^{n}\big[y_1+\eta_1,\cdots,y_{i-1}+\eta_{i-1},[x_1+\xi_1,\cdots,x_{n-1}+\xi_{n-1},y_i+\eta_i]_{\frkd},y_{i+1}+\eta_{i+1},\cdots,y_n+\eta_n\big]_{\frkd} \\
  &=& \sum_{k=1}^6\big((A_k)+(B_k)\big)-(C_1)-(C_2),
\end{eqnarray*}
where
\begin{align*}
  (A_1)&:=[x_1,\cdots,x_{n-1},[y_1,\cdots,y_n]]-\sum\limits_{i=1}^n[y_1,\cdots,y_{i-1},[x_1,\cdots,x_{n-1},y_i],y_{i+1},\cdots,y_n].\\
  (A_2)&:=\ada^{\ast}_{\xi_1,\cdots,\xi_{n-1}}\big([y_1,\cdots,y_n]\big)
  -\sum\limits_{i=1}^{n}[y_1,\cdots,y_{i-1},\ada_{\xi_1,\cdots,\xi_{n-1}}^\ast y_i,y_{i+1},\cdots,y_n].\\
  (A_3)&:=[x_1,\cdots,x_{n-1},\sum\limits_{i=1}^{n}(-1)^{n-i}\ada_{\eta_1,\cdots,\hat{\eta_i},\cdots,\eta_n}^\ast y_i]
  -\sum\limits_{i=1}^{n}(-1)^{n-i}\ada_{\eta_1,\cdots,\hat{\eta_i},\cdots,\eta_n}^\ast\big([x_1,\cdots,x_{n-1},y_i]\big) \\
  &- \sum\limits_{i=1}^{n}\sum\limits_{k=1}^{i-1}(-1)^{n-k}\ada_{\eta_1,\cdots,\hat{\eta_k},\cdots,\eta_{i-1},\ad_{x_1,\cdots,x_{n-1}}^\ast\eta_i,\eta_{i+1},\cdots,\eta_n}^\ast y_k \\
  &- \sum\limits_{i=1}^{n}\sum\limits_{k=i+1}^{n}(-1)^{n-k}\ada_{\eta_1,\cdots,\eta_{i-1},\ad_{x_1,\cdots,x_{n-1}}^\ast\eta_i,\eta_{i+1},\cdots,\hat{\eta_k},\cdots,\eta_n}^\ast y_k.\\
  (A_4) &:= \sum\limits_{j=1}^{n-1}(-1)^{n-j}\ada_{\xi_1,\cdots,\hat{\xi_j},\cdots,\xi_{n-1},\sum\limits_{i=1}^{n}(-1)^{n-i}\ad_{y_1,\cdots,\hat{y_i},\cdots,y_n}^\ast \eta_i}^\ast x_j \\
  &- \sum\limits_{i=1}^{n}[y_1,\cdots,y_{i-1},\sum\limits_{j=1}^{n-1}(-1)^{n-j}\ada_{\xi_1,\cdots,\hat{\xi_j},\cdots,\xi_{n-1},\eta_i}^\ast x_j,y_{i+1},\cdots,y_n].\\
 (A_5) &:= \ada^{\ast}_{\xi_1,\cdots,\xi_{n-1}}\big(\sum\limits_{i=1}^{n}(-1)^{n-i}\ada_{\eta_1,\cdots,\hat{\eta_i},\cdots,\eta_n}^\ast y_i\big)
 -\sum\limits_{i=1}^{n}(-1)^{n-i}\ada_{\eta_1,\cdots,\hat{\eta_i},\cdots,\eta_n}^\ast\big(\ada_{\xi_1,\cdots,\xi_{n-1}}^\ast y_i\big) \\
 &- \sum\limits_{i=1}^{n}\sum\limits_{k=1}^{i-1}(-1)^{n-k}\ada_{\eta_1,\cdots,\hat{\eta_k},\cdots,\eta_{i-1},[\xi_1,\cdots,\xi_{n-1},\eta_i]_{\g^\ast},\eta_{i+1},\cdots,\eta_n}^\ast y_k\\
 &-\sum\limits_{i=1}^{n}\sum\limits_{k=i+1}^{n}(-1)^{n-k}\ada_{\eta_1,\cdots,\eta_{i-1},[\xi_1,\cdots,\xi_{n-1},\eta_i]_{\g^\ast},\eta_{i+1},\cdots,\hat{\eta_k},\cdots,\eta_n}^\ast y_k.\\
(A_6)&:=\sum\limits_{j=1}^{n-1}(-1)^{n-j}\ada_{\xi_1,\cdots,\hat{\xi_j},\cdots,\xi_{n-1},[\eta_1,\cdots,\eta_n]_{\g^\ast}}^\ast x_j-\sum\limits_{i=1}^{n}(-1)^{n-i}\ada_{\eta_1,\cdots,\hat{\eta_i},\cdots,\eta_n}^\ast\big(\sum\limits_{j=1}^{n-1}(-1)^{n-j}\ada_{\xi_1,\cdots,\hat{\xi_j},\cdots,\xi_{n-1},\eta_i}^\ast x_j\big).\\
  (B_1)&:=\big[\xi_1,\cdots,\xi_{n-1},[\eta_1,\cdots,\eta_n]_{\g^\ast}\big]_{\g^\ast}-\sum\limits_{i=1}^{n}\big[\eta_1,\cdots,\eta_{i-1},[\xi_1,\cdots,\xi_{n-1},\eta_i]_{\g^\ast},\eta_{i+1},\cdots,\eta_n\big]_{\g^\ast}.\\
  (B_2)&:=\ad_{x_1,\cdots,x_{n-1}}\big([\eta_1,\cdots,\eta_n]_{\g^\ast}\big)-\sum\limits_{i=1}^{n}[\eta_1,\cdots,\eta_{i-1},\ad_{x_1,\cdots,x_{n-1}}^\ast\eta_i,\eta_{i+1},\cdots,\eta_n]_{\g^\ast}.\\
  (B_3) &:= [\xi_1,\cdots,\xi_{n-1},\sum\limits_{i=1}^{n}(-1)^{n-i}\ad_{y_1,\cdots,\hat{y_i},\cdots,y_n}^\ast \eta_i]_{\g^\ast}-\sum\limits_{i=1}^{n}(-1)^{n-i}\ad_{y_1,\cdots,\hat{y_i},\cdots,y_n}^\ast\big([\xi_1,\cdots,\xi_{n-1},\eta_i]_{\g^\ast}\big) \\
  &- \sum\limits_{i=1}^{n}\sum\limits_{k=1}^{i-1}(-1)^{n-k}\ad_{y_1,\cdots,\hat{y_k},\cdots,y_{i-1},\ada_{\xi_1,\cdots,\xi_{n-1}}^\ast y_i,y_{i+1},\cdots,y_n}^\ast \eta_k \\
  &- \sum\limits_{i=1}^{n}\sum\limits_{k=i+1}^{n}(-1)^{n-k}\ad_{y_1,\cdots,y_{i-1},\ada_{\xi_1,\cdots,\xi_{n-1}}^\ast y_i,y_{i+1},\cdots,\hat{y_k},\cdots,y_n}^\ast\eta_k.\\
  (B_4) &:= \sum\limits_{j=1}^{n-1}(-1)^{n-j}\ad_{x_1,\cdots,\hat{x_j},\cdots,x_{n-1},\sum\limits_{i=1}^{n}(-1)^{n-i}\ada_{\eta_1,\cdots,\hat{\eta_i},\cdots,\eta_n}^\ast y_i}^\ast \xi_j \\
  &- \sum\limits_{i=1}^{n}[\eta_1,\cdots,\eta_{i-1},\sum\limits_{j=1}^{n-1}(-1)^{n-j}\ad_{x_1,\cdots,\hat{x_j},\cdots,x_{n-1},y_i}^\ast \xi_j,\eta_{i+1},\cdots,\eta_n]_{\g^\ast}.\\
(B_5) &:= \ad^{\ast}_{x_1,\cdots,x_{n-1}}\big(\sum\limits_{i=1}^{n}(-1)^{n-i}\ad_{y_1,\cdots,\hat{y_i},\cdots,y_n}^\ast \eta_i\big)-\sum\limits_{i=1}^{n}(-1)^{n-i}\ad_{y_1,\cdots,\hat{y_i},\cdots,y_n}^\ast\big(\ad_{x_1,\cdots,x_{n-1}}^\ast \eta_i\big)\\
&- \sum\limits_{i=1}^{n}\sum\limits_{k=1}^{i-1}(-1)^{n-k}\ad_{y_1,\cdots,\hat{y_k},\cdots,y_{i-1},[x_1,\cdots,x_{n-1},y_i],y_{i+1},\cdots,y_n}^\ast \eta_k\\
&-\sum\limits_{i=1}^{n}\sum\limits_{k=i+1}^{n}(-1)^{n-k}\ad_{y_1,\cdots,y_{i-1},[x_1,\cdots,x_{n-1},y_i],y_{i+1},\cdots,\hat{y_k},\cdots,y_n}^\ast\eta_k.\\
(B_6)&:=\sum\limits_{j=1}^{n-1}(-1)^{n-j}\ad_{x_1,\cdots,\hat{x_j},\cdots,x_{n-1},[y_1,\cdots,y_n]}^\ast \xi_j-\sum\limits_{i=1}^{n}(-1)^{n-i}\ad_{y_1,\cdots,\hat{y_i},\cdots,y_n}^\ast\big(\sum\limits_{j=1}^{n-1}(-1)^{n-j}\ad_{x_1,\cdots,\hat{x_j},\cdots,x_{n-1},y_i}^\ast \xi_j\big).\\
  (C_1) &:= \sum\limits_{i=1}^{n}\sum\limits_{k=1}^{i-1}(-1)^{n-k}\ada_{\eta_1,\cdots,\hat{\eta_k},\cdots,\eta_{i-1},\sum\limits_{j=1}^{n-1}(-1)^{n-j}\ad_{x_1,\cdots,\hat{x_j},\cdots,x_{n-1},y_i}^\ast\xi_j,\eta_{i+1},\cdots,\eta_n}^\ast y_k \\
  &+ \sum\limits_{i=1}^{n}\sum\limits_{k=i+1}^{n}(-1)^{n-k}\ada_{\eta_1,\cdots,\eta_{i-1},\sum\limits_{j=1}^{n-1}(-1)^{n-j}\ad_{x_1,\cdots,\hat{x_j},\cdots,x_{n-1},y_i}^\ast\xi_j,\eta_{i+1},\cdots,\hat{\eta_k},\cdots,\eta_n}^\ast y_k.\\
(C_2) &:= \sum\limits_{i=1}^{n}\sum\limits_{k=1}^{i-1}(-1)^{n-k}\ad_{y_1,\cdots,\hat{y_k},\cdots,y_{i-1},\sum\limits_{j=1}^{n-1}(-1)^{n-j}\ada_{\xi_1,\cdots,\hat{\xi_j},\cdots,\xi_{n-1},\eta_i}^\ast x_j,y_{i+1},\cdots,y_n}^\ast \eta_k\\
&+\sum\limits_{i=1}^{n}\sum\limits_{k=i+1}^{n}(-1)^{n-k}\ad_{y_1,\cdots,y_{i-1},\sum\limits_{j=1}^{n-1}(-1)^{n-j}\ada_{\xi_1,\cdots,\hat{\xi_j},\cdots,\xi_{n-1},\eta_i}^\ast x_j,y_{i+1},\cdots,\hat{y_k},\cdots,y_n}^\ast\eta_k.
\end{align*}

In the following, we prove that $(A_i),(B_j),(C_k)$ are zero, respectively, $1\leq i,j\leq6, 1\leq k\leq2$.

$\bf(1).$ By the Filippov-Jacobi identity on $\g$ and $\g^{\ast}$, it is obvious that $(A_1)$ and $(B_1)$ are zero. Let $i=1$ in the Definition \ref{defi:$R_i$-operad}, %of $R_1$-operad $n$-Lie bialgebra,
we obtain
\begin{equation}
 \gamma_\g([x_1,\cdots,x_n]) = \sum\limits_{i=1}^{n}(-1)^{n-i}(\ad_{x_1,\cdots,x_{i-1},
\hat{x_i},x_{i+1},\cdots,x_n}\otimes^{n-1}1)\gamma_\g(x_i).
\end{equation}
For any $\xi_1\otimes\cdots\otimes\xi_n\in \otimes^n\g^*$, we have
\begin{align*}
 \langle \gamma_\g([x_1,\cdots,x_n]), \xi_1\otimes\cdots\otimes\xi_n\rangle
%&=& \langle [x_1,\cdots,x_n], [\xi_1,\cdots,\xi_n]_{\g^*}\rangle\\
&=-\langle  \ada_{\xi_1,\cdots,\xi_{n-1}}^\ast([x_1,\cdots,x_n]), \xi_n\rangle,
\end{align*}
and
\begin{eqnarray*}
&&\sum\limits_{i=1}^{n}(-1)^{n-i}\langle(\ad_{x_1,\cdots,x_{i-1},
\hat{x_i},x_{i+1},\cdots,x_n}\otimes^{n-1}1)\gamma_\g(x_i),
\xi_1\otimes\cdots\otimes\xi_n\rangle\\
%&=&\sum\limits_{i=1}^{n}(-1)^{n-i}(-1)^{n-1}\langle(\ad_{x_1,\cdots,x_{i-1},
%\hat{x_i},x_{i+1},\cdots,x_n}\otimes1\otimes\cdots\otimes1)\gamma_\g(x_i),
%\xi_n\otimes\cdots\otimes\xi_1\rangle\\
%&=&-\sum\limits_{i=1}^{n}(-1)^{i+1}\langle\gamma_\g(x_i),
%\ad_{x_1,\cdots,x_{i-1},\hat{x_i},x_{i+1},\cdots,x_n}^*(\xi_n)\otimes\cdots\otimes\xi_1\rangle\\
%&=&-\sum\limits_{i=1}^{n}(-1)^{i+1}\langle x_i,
%[\ad_{x_1,\cdots,x_{i-1},\hat{x_i},x_{i+1},\cdots,x_n}^*\xi_n,\cdots,\xi_1]_{\g^*}\rangle\\
%&=&-\sum\limits_{i=1}^{n}(-1)^{i+1}(-1)^{n-1}\langle x_i,
%[\xi_1,\cdots,\ad_{x_1,\cdots,x_{i-1},\hat{x_i},x_{i+1},\cdots,x_n}^*\xi_n]_{\g^*}\rangle\\
%&=&\sum\limits_{i=1}^{n}(-1)^{n-i}\langle \ad_{\xi_1,\cdots,\xi_{n-1}}^\ast(x_i),
%\ad_{x_1,\cdots,x_{i-1},\hat{x_i},x_{i+1},\cdots,x_n}^*\xi_n\rangle\\
%&=&-\sum\limits_{i=1}^{n}(-1)^{n-i}\langle [x_1,\cdots,x_{i-1},\hat{x_i},x_{i+1},\cdots,x_n,\ad_{\xi_1,\cdots,\xi_{n-1}}^\ast(x_i)],
%\xi_n\rangle\\
&=&-\sum\limits_{i=1}^{n}\langle[x_1,\cdots,x_{i-1},\ada_{\xi_1,\cdots,\xi_{n-1}}^\ast x_i,x_{i+1},\cdots,x_n], \xi_n\rangle.
\end{eqnarray*}
Then $(A_2)$ is zero.
%\begin{eqnarray}
% \ad_{\xi_1,\cdots,\xi_{n-1}}^\ast([y_1,\cdots,y_n])
%  = \sum\limits_{i=1}^{n}[y_1,\cdots,y_{i-1},\ad_{\xi_1,\cdots,\xi_{n-1}}^\ast y_i,y_{i+1},\cdots,y_n].\label{eqs:61}
%\end{eqnarray}

$\bf(2).$ By \eqref{eq:n-centroid}, for all $y_1,\cdots,y_n\in\g$ we obtain
\begin{equation}\label{eq:n-centroid-2}
 \gamma_\g([x_1,\cdots,x_{n-1},\sum_{i=1}^ny_i]) = \sum_{i=1}^n\sum\limits_{k=1}^{n}(1\otimes^{k-1}\ad_{x_1,
\cdots,x_{n-1}}\otimes^{n-k}1)\gamma_\g(y_i).
\end{equation}
Then for any $\eta_1\otimes\cdots\otimes\eta_n\in \otimes^n\g^*$,
\begin{eqnarray*}
&&\langle \gamma_\g([x_1,\cdots,x_{n-1},\sum_{i=1}^ny_i]), \eta_1\otimes\cdots\otimes\eta_n\rangle\\
%&=& \sum_{i=1}^n\langle [x_1,\cdots,x_{n-1},y_i], [\eta_1,\cdots,\eta_n]_{\g^*}\rangle\\
%&=& \sum_{i=1}^n\langle [x_1,\cdots,x_{n-1},y_i], (-1)^{n-i}[\eta_1,\cdots,\eta_{i-1},\hat{\eta}_i,\eta_{i+1},\cdots\eta_n,\eta_i]_{\g^*}\rangle\\
&=&-\sum_{i=1}^n(-1)^{n-i}\langle  \ada_{\eta_1,\cdots,\eta_{i-1},\hat{\eta}_i,\eta_{i+1},\cdots\eta_n}^\ast([x_1,\cdots,x_{n-1},y_i]), \eta_i\rangle,
\end{eqnarray*}

\begin{eqnarray*}
&&\sum\limits_{i=1}^{n}\sum_{k=1}^{n}\langle (1\otimes^{k-1}\ad_{x_1,
\cdots,x_{n-1}}\otimes^{n-k}1)\gamma_\g(y_i),\eta_1\otimes\cdots\otimes\eta_n\rangle \\
&=&\sum\limits_{i=1}^{n}\sum_{k=1}^{i-1}(-1)^{(n-k)}\langle
\ada^*_{\eta_1,\cdots,\eta_{k-1},\hat{\eta}_k,\eta_{k+1},\cdots,
 \ad_{x_1,\cdots,x_{n-1}}^*(\eta_i),\cdots,\eta_n}(y_k),
\eta_k\rangle \\
&+&\sum_{i=1}^{n}\sum_{k=i+1}^{n}(-1)^{(n-k)}\langle
\ada^*_{\eta_1,\cdots,\ad_{x_1,\cdots,x_{n-1}}^*(\eta_i), \cdots,\eta_{k-1},\hat{\eta}_k,\eta_{k+1},\cdots\eta_n}
 (y_k),
\eta_k\rangle\\
&-&\sum_{i=1}^{n}(-1)^{(n-i)}\langle [x_1,\cdots,x_{n-1},\ada_{\eta_1,\cdots,\eta_{i-1},\hat{\eta}_{i},
  \eta_{i+1},\cdots,\eta_n}^* (y_i)],\eta_i\rangle. \\
\end{eqnarray*}
It is proved that $(A_3)$ is zero.

$\bf(3).$ By \eqref{eq:1}, for the integer $1\leq i \leq n$, $y_i, x_1, \cdots, x_{n-1} \in \g$, we can get the following equations:
\begin{equation*}
  (1\otimes^{j-1}\ad_{y_1,\cdots,y_{i-1},\hat{y_i},y_{i+1},\cdots,y_n}\otimes^{n-j}1
  +1\otimes^{n-1}\ad_{y_1,\cdots,y_{i-1},\hat{y_i},y_{i+1},\cdots,y_n})\gamma(x_j)=0,
  \quad\forall j=1,2,\cdots,n-1.
\end{equation*}
%\begin{eqnarray*}
%% \nonumber to remove numbering (before each equation)
%  (\ad_{y_1,\cdots,y_{i-1},\hat{y_i},y_{i+1},\cdots,y_n}\otimes^{n-1}1+\otimes^{n-1}1\otimes\ad_{y_1,\cdots,y_{i-1},\hat{y_i},y_{i+1},\cdots,y_n})\gamma(x_1) &=& 0, \\
%  (1\otimes\ad_{y_1,\cdots,y_{i-1},\hat{y_i},y_{i+1},\cdots,y_n}\otimes^{n-2}1+\otimes^{n-1}1\otimes\ad_{y_1,\cdots,y_{i-1},\hat{y_i},y_{i+1},\cdots,y_n})\gamma(x_2) &=& 0, \\
%  &\cdots&  \\
%  (\otimes^{n-2}1\otimes\ad_{y_1,\cdots,y_{i-1},\hat{y_i},y_{i+1},\cdots,y_n}\otimes1+\otimes^{n-1}1\otimes\ad_{y_1,\cdots,y_{i-1},\hat{y_i},y_{i+1},\cdots,y_n})\gamma(x_{n-1}) &=& 0.
%\end{eqnarray*}
Then
\begin{equation}\label{eq:3}
  \sum\limits_{i=1}^{n}\sum\limits_{j=1}^{n-1}(-1)^{n-i}\big((1\otimes^{j-1}\ad_{y_1,\cdots,\hat{y_i},\cdots,y_n}\otimes^{n-j}1) +(1\otimes^{n-1}\ad_{y_1,\cdots,\hat{y_i},\cdots,y_n})\big)\gamma(x_j)
  =0.
\end{equation}
For any $\xi_1\otimes\cdots\otimes\xi_{n-1}\otimes\eta_i \in \otimes^n\g^\ast$, by \eqref{eq:3},
\begin{eqnarray*}
% \nonumber to remove numbering (before each equation)
&&\langle~\sum\limits_{i=1}^{n}\sum\limits_{j=1}^{n-1}(-1)^{n-i}\big((1\otimes^{j-1}\ad_{y_1,\cdots,\hat{y_i},\cdots,y_n}\otimes^{n-j}1)\\
&+& (1\otimes^{n-1}\ad_{y_1,\cdots,\hat{y_i},\cdots,y_n})\big)\gamma(x_j),~ \xi_1\otimes\cdots\otimes\xi_{n-1}\otimes\eta_i~\rangle\\
&=&-\sum\limits_{i=1}^{n}\sum\limits_{j=1}^{n-1}(-1)^{n-j}\langle~[y_1,\cdots,y_{i-1}, \ada^\ast_{\xi,\cdots,\xi_{j-1},\hat{\xi_j},\xi_{j+1},\cdots,\xi_{n-1},\eta_i}x_j,y_{i+1},\cdots,y_n], \xi_j~\rangle\\
&+&\sum\limits_{i=1}^{n}\sum\limits_{j=1}^{n-1}(-1)^{n-i}(-1)^{n-j}\langle~\ada^\ast_{\xi,\cdots,\xi_{j-1},\hat{\xi_j},\xi_{j+1},\cdots,\xi_{n-1},\ad_{y_1,\cdots,y_{i-1},\hat{y_i},y_{i+1},\cdots,y_n}^\ast\eta_i}x_j, \xi_j~\rangle).
\end{eqnarray*}
Therefore, $(A_4)$ is zero.

$\bf (4).$ By \eqref{eq:2}, for $1\leq i, j, k, l\leq n$, $i\neq k$, $x_1,\cdots,x_{n-1},y_1,\cdots,y_n\in \g$, we have
\begin{equation*}
  (1\otimes^{i-1}\ad_{x_1,\cdots,\hat{x_j},\cdots,x_{n-1},y_i}\otimes^{n-i}1)\gamma(y_l)
  +(1\otimes^{k-1}\ad_{x_1,\cdots,\hat{x_j},\cdots,x_{n-1},y_l}\otimes^{n-k}1)\gamma(y_i) = 0.
\end{equation*}
%\begin{eqnarray*}
%% \nonumber to remove numbering (before each equation)
%  (\otimes^{i-1}1\otimes\ad_{x_1,\cdots,\hat{x_j},\cdots,x_{n-1},y_i}\otimes^{n-i}1)\gamma(y_1)
%  +(\otimes^{k-1}1\otimes\ad_{x_1,\cdots,\hat{x_j},\cdots,x_{n-1},y_1}\otimes^{n-k}1)\gamma(y_i) &=& 0, \\
%  (\otimes^{i-1}1\otimes\ad_{x_1,\cdots,\hat{x_j},\cdots,x_{n-1},y_i}\otimes^{n-i}1)\gamma(y_2)
%  +(\otimes^{k-1}1\otimes\ad_{x_1,\cdots,\hat{x_j},\cdots,x_{n-1},y_2}\otimes^{n-k}1)\gamma(y_i) &=& 0, \\
%   &\cdots&  \\
%  (\otimes^{i-1}1\otimes\ad_{x_1,\cdots,\hat{x_j},\cdots,x_{n-1},y_i}\otimes^{n-i}1)\gamma(y_n)
%  +(\otimes^{k-1}1\otimes\ad_{x_1,\cdots,\hat{x_j},\cdots,x_{n-1},y_n}\otimes^{n-k}1)\gamma(y_i) &=& 0.
%\end{eqnarray*}
Then
\begin{equation*}
  \sum\limits_{j=1}^{n-1}\sum\limits_{k=1,k\neq i}^{n}\big((1\otimes^{i-1}\ad_{x_1,\cdots,\hat{x_j},\cdots,x_{n-1},y_i}\otimes^{n-i}1)\gamma(y_1)
  +(1\otimes^{k-1}\ad_{x_1,\cdots,\hat{x_j},\cdots,x_{n-1},y_1}\otimes^{n-k}1)\gamma(y_i)\big)=0,
\end{equation*}
and
\begin{eqnarray}\label{eq:4}
% \nonumber to remove numbering (before each equation)
  &&\sum\limits_{i=1}^{n}\sum\limits_{j=1}^{n-1}\sum\limits_{k=1}^{n}(-1)^{n-j}(1\otimes^{i-1}\ad_{x_1,\cdots,x_{j-1},\hat{x_j},x_{j+1},\cdots,x_{n-1},y_i}\otimes^{n-i}1)\gamma(y_k)  \\
  \nonumber&=&\sum\limits_{i=1}^{n}\sum\limits_{j=1}^{n-1}(-1)^{n-j}(1\otimes^{i-1}\ad_{x_1,\cdots,x_{j-1},\hat{x_j},x_{j+1},\cdots,x_{n-1},y_i}\otimes^{n-i}1)\gamma(y_i).
\end{eqnarray}
For any $\eta_1\otimes\cdots\otimes\eta_{i-1}\otimes\xi_j\otimes\eta_{i+1}\otimes\cdots\otimes\eta_n \in \otimes^n\g^\ast$, by \eqref{eq:4}
\begin{eqnarray*}
% \nonumber to remove numbering (before each equation)
  &&\sum\limits_{i=1}^{n}\sum\limits_{j=1}^{n-1}\sum\limits_{k=1}^{n}(-1)^{n-j}\langle~(1\otimes^{i-1}\ad_{x_1,\cdots,x_{j-1},\hat{x_j},x_{j+1},\cdots,x_{n-1},y_i}\otimes^{n-i}1)\gamma(y_k),\\
  &&\eta_1\otimes\cdots\otimes\eta_{i-1}\otimes\xi_j\otimes\eta_{i+1}\otimes\cdots\otimes\eta_n~\rangle\\
%\end{eqnarray*}
%\begin{eqnarray*}
 % &=&-\sum\limits_{i=1}^{n}\sum\limits_{j=1}^{n-1}\sum\limits_{k=1}^{i-1}(-1)^{n-j}\langle~y_k,[\eta_1,\cdots,\eta_k,\cdots,\eta_{i-1},\ad_{x_1,\cdots,x_{j-1},\hat{x_j},x_{j+1},\cdots,x_{n-1},y_i}\xi_j,\eta_{i+1},\cdots,\eta_n]_{\g^\ast}~\rangle  \\
%  &&-\sum\limits_{i=1}^{n}\sum\limits_{j=1}^{n-1}\sum\limits_{k=i+1}^{n}(-1)^{n-j}\langle~y_k,[\eta_1,\cdots,\eta_{i-1},\ad_{x_1,\cdots,x_{j-1},\hat{x_j},x_{j+1},\cdots,x_{n-1},y_i}\xi_j,\eta_{i+1},\cdots,\eta_k,\cdots,\eta_n]_{\g^\ast}~\rangle  \\
%  &&-\sum\limits_{i=1}^{n}\sum\limits_{j=1}^{n-1}(-1)^{n-j}\langle~y_i,[\eta_1,\cdots,\eta_{i-1},\ad_{x_1,\cdots,x_{j-1},\hat{x_j},x_{j+1},\cdots,x_{n-1},y_i}\xi_j,\eta_{i+1},\cdots,\eta_n]_{\g^\ast}~\rangle \\
%  &=&-\sum\limits_{i=1}^{n}\sum\limits_{j=1}^{n-1}\sum\limits_{k=1}^{i-1}(-1)^{n-j}(-1)^{n-k}\langle~y_k,[\eta_1,\cdots,\hat{\eta_k},\cdots,\eta_{i-1},\ad_{x_1,\cdots,\hat{x_j},\cdots,x_{n-1},y_i}\xi_j,\eta_{i+1},\cdots,\eta_n,\eta_k]_{\g^\ast}~\rangle \\
%  &&-\sum\limits_{i=1}^{n}\sum\limits_{j=1}^{n-1}\sum\limits_{k=i+1}^{n}(-1)^{n-j}(-1)^{n-k}\langle~y_k,[\eta_1,\cdots,\eta_{i-1},\ad_{x_1,\cdots,\hat{x_j},\cdots,x_{n-1},y_i}\xi_j,\eta_{i+1},\cdots,\hat{\eta_k},\cdots,\eta_n,\eta_k]_{\g^\ast}~\rangle
%  \\
%  &&-\sum\limits_{i=1}^{n}\sum\limits_{j=1}^{n-1}(-1)^{n-j}\langle~y_i,[\eta_1,\cdots,\eta_{i-1},\ad_{x_1,\cdots,x_{j-1},\hat{x_j},x_{j+1},\cdots,x_{n-1},y_i}\xi_j,\eta_{i+1},\cdots,\eta_n]_{\g^\ast}~\rangle \\
  &=&\sum\limits_{i=1}^{n}\sum\limits_{j=1}^{n-1}\sum\limits_{k=1}^{i-1}(-1)^{n-j}(-1)^{n-k}\langle~\ada^\ast_{\eta_1,\cdots,\hat{\eta_k},\cdots,\eta_{i-1},\ad_{x_1,\cdots,x_{j-1},\hat{x_j},x_{j+1},\cdots,x_{n-1},y_i}\xi_j,\eta_{i+1},\cdots,\eta_n}y_k,\eta_k~\rangle \\
  &&+\sum\limits_{i=1}^{n}\sum\limits_{j=1}^{n-1}\sum\limits_{k=i+1}^{n}(-1)^{n-j}(-1)^{n-k}\langle~\ada^\ast_{\eta_1,\cdots,\eta_{i-1},\ad_{x_1,\cdots,x_{j-1},\hat{x_j},x_{j+1},\cdots,x_{n-1},y_i}\xi_j,\eta_{i+1},\cdots,\hat{\eta_k},\cdots,\eta_n}y_k,\eta_k~\rangle
  \\
  &&-\sum\limits_{i=1}^{n}\sum\limits_{j=1}^{n-1}(-1)^{n-j}\langle~y_i,[\eta_1,\cdots,\eta_{i-1},\ad_{x_1,\cdots,x_{j-1},\hat{x_j},x_{j+1},\cdots,x_{n-1},y_i}\xi_j,\eta_{i+1},\cdots,\eta_n]_{\g^\ast}~\rangle,
\end{eqnarray*}
and
\begin{align*}
% \nonumber to remove numbering (before each equation)
&\sum\limits_{i=1}^{n}\sum\limits_{j=1}^{n-1}(-1)^{n-j}\langle~(1\otimes^{i-1}\ad_{x_1,\cdots,x_{j-1},\hat{x_j},x_{j+1},\cdots,x_{n-1},y_i}\otimes^{n-i}1)\gamma(y_i),\\
&\quad\quad\quad\quad\quad\quad\eta_1\otimes\cdots\otimes\eta_{i-1}\otimes\xi_j\otimes\eta_{i+1}\otimes\cdots\otimes\eta_n~\rangle \\
=&-\sum\limits_{i=1}^{n}\sum\limits_{j=1}^{n-1}(-1)^{n-j}\langle~y_i,[\eta_1,\cdots,\eta_{i-1},\ad_{x_1,\cdots,x_{j-1},\hat{x_j},x_{j+1},\cdots,x_{n-1},y_i}\xi_j,\eta_{i+1},\cdots,\eta_n]_{\g^\ast}~\rangle.
\end{align*}
Then $(C_1)$ is zero.

Similarly, $(B_2),(B_3),(B_4),(C_2)$ are zero.

$\bf(5).$ By \eqref{eq:FJi}, for all $y_1,\cdots,y_n \in \g,\xi_1,\cdots,\xi_{n-1},\eta_1,\cdots,\eta_n \in \g^\ast$,
\begin{align*}\label{9}
% \nonumber to remove numbering (before each equation)
  & \langle~\sum\limits_{i=1}^{n}y_i,[\xi_1,\cdots,\xi_{n-1},[\eta_1,\cdots,\eta_n]_{\g^\ast}]_{\g^\ast}~\rangle \\
  \nonumber=&\langle~\sum\limits_{i=1}^{n}y_i,\sum\limits_{k=1}^{n}[\eta_1,\cdots,\eta_{k-1},[\xi_1,\cdots,\xi_{n-1},\eta_k]_{\g^\ast},\eta_{k+1},\cdots,\eta_n]_{\g^\ast}~\rangle.
\end{align*}
Then we have
\begin{align*}
  &\langle~\sum\limits_{i=1}^{n}y_i,[\xi_1,\cdots,\xi_{n-1},[\eta_1,\cdots,\eta_n]_{\g^\ast}]_{\g^\ast}~\rangle \\
%\end{equation*}
%\begin{eqnarray*}
%% \nonumber to remove numbering (before each equation)
%  =&& \langle~-\sum\limits_{i=1}^{n}\ad^\ast_{\xi_1,\cdots,\xi_{n-1}}y_i,[\eta_1,\cdots,\eta_n]_{\g^\ast}~\rangle \\
%  =&& \langle~-\sum\limits_{i=1}^{n}\ad^\ast_{\xi_1,\cdots,\xi_{n-1}}y_i,(-1)^{n-i}[\eta_1,\cdots,\eta_{i-1},\hat\eta_i,\eta_{i+1},\cdots,\eta_n,\eta_i]_{\g^\ast}~\rangle \\
  =& \langle~\sum\limits_{i=1}^{n}(-1)^{n-i}\ada^\ast_{\eta_1,\cdots,\eta_{i-1},\hat\eta_i,\eta_{i+1},\cdots,\eta_n}(\ada^\ast_{\xi_1,\cdots,\xi_{n-1}}y_i),\eta_i~\rangle,
\end{align*}
and
\begin{align*}
% \nonumber to remove numbering (before each equation)
  & \langle~\sum\limits_{i=1}^{n}y_i,\sum\limits_{k=1}^{n}[\eta_1,\cdots,\eta_{k-1},[\xi_1,\cdots,\xi_{n-1},\eta_k]_{\g^\ast},\eta_{k+1},\cdots,\eta_n]_{\g^\ast}~\rangle \\
  =& \langle~\ada^\ast_{\xi_1,\cdots,\xi_{n-1}}(\sum\limits_{i=1}^{n}(-1)^{n-i}\ada^\ast_{\eta_1,\cdots,\hat\eta_i,\cdots,\eta_n}y_i),\eta_i~\rangle \\
  -&\langle~\sum\limits_{i=1}^{n}\sum\limits_{k=1}^{i-1}(-1)^{n-k}\ada^\ast_{\eta_1,\cdots,\hat\eta_k,\cdots,\eta_{i-1},[\xi_1,\cdots,\xi_{n-1},\eta_i]_{\g^\ast},\eta_{i+1},\cdots,\eta_n}y_k,\eta_k~\rangle
  \\
  -&\langle~\sum\limits_{i=1}^{n}\sum\limits_{k=i+1}^{n}(-1)^{n-k}\ada^\ast_{\eta_1,\cdots,\eta_{i-1},[\xi_1,\cdots,\xi_{n-1},\eta_i]_{\g^\ast},\eta_{i+1},\cdots,\hat\eta_k,\cdots,\eta_n}y_k,\eta_k~\rangle.
\end{align*}
Then $(A_5)$ is zero.

$\bf(6).$ Thanks to \eqref{eq:FJi},
\begin{align*}
% \nonumber to remove numbering (before each equation)
  &[\xi_1,\cdots,\xi_{n-1},[\eta_1,\cdots,\eta_n]_{\g^\ast}]_{\g^\ast} \\
  %\nonumber&&= \sum\limits_{i=1}^{n}[\eta_1,\cdots,\eta_{i-1},[\xi_1,\cdots,\xi_{n-1},\eta_i]_{\g^\ast},\eta_{i+1},\cdots,\eta_n]_{\g^\ast} \\
%  \nonumber&&= \sum\limits_{i=1}^{n}(-1)^{n-i}[\eta_1,\cdots,\eta_{i-1},\hat{\eta_i},\eta_{i+1},\cdots,\eta_n,[\xi_1,\cdots,\xi_{n-1},\eta_i]_{\g^\ast}]_{\g^\ast} \\
%  \nonumber&&= \sum\limits_{i=1}^{n}(-1)^{n-i}\sum\limits_{j=1}^{n-1}[\xi_1,\cdots,\xi_{j-1},[\eta_1,\cdots,\eta_{i-1},\hat{\eta_i},\eta_{i+1},\cdots,\eta_n,\xi_j]_{\g^\ast},\xi_{j+1},\cdots,\xi_{n-1},\eta_i]_{\g^\ast} \\
%  \nonumber&&+ \sum\limits_{i=1}^{n}(-1)^{n-i}[\xi_1,\cdots,\xi_{n-1},[\eta_1,\cdots,\eta_{i-1},\hat{\eta_i},\eta_{i+1},\cdots,\eta_n,\eta_i]_{\g^\ast}]_{\g^\ast}\\
  \nonumber=&\sum\limits_{i=1}^{n}\sum\limits_{j=1}^{n-1}(-1)^{n-i}(-1)^{n-j}[\xi_1,\cdots,\hat{\xi_j},\cdots,\xi_{n-1},\eta_i,[\eta_1,\cdots,\eta_{i-1},\hat{\eta_i},\eta_{i+1},\cdots,\eta_n,\xi_j]_{\g^\ast}]_{\g^\ast} \\
  \nonumber&+ \sum\limits_{i=1}^{n}[\xi_1,\cdots,\xi_{n-1},[\eta_1,\cdots,\eta_n]_{\g^\ast}]_{\g^\ast},
\end{align*}
thus, we obtain
\begin{eqnarray*}\label{eq:FJi-biyao2}
&&-(n-1)[\xi_1,\cdots,\xi_{n-1},[\eta_1,\cdots,\eta_n]_{\g^\ast}]_{\g^\ast}\\
\nonumber&&=\sum\limits_{i=1}^{n}\sum\limits_{j=1}^{n-1}(-1)^{n-i}(-1)^{n-j}[\xi_1,\cdots,\hat{\xi_j},\cdots,\xi_{n-1},\eta_i,[\eta_1,\cdots,\eta_{i-1},\hat{\eta_i},\eta_{i+1},\cdots,\eta_n,\xi_j]_{\g^\ast}]_{\g^\ast},
\end{eqnarray*}
and
\begin{eqnarray*}\label{eq:FJi-biyao22}
% \nonumber to remove numbering (before each equation)
&&-\sum\limits_{j=1}^{n-1}\langle~[\xi_1,\cdots,\xi_{n-1},[\eta_1,\cdots,\eta_n]_{\g^\ast}]_{\g^\ast},x_j~\rangle \\
\nonumber&&=\sum\limits_{i=1}^{n}\sum\limits_{j=1}^{n-1}(-1)^{n-i}(-1)^{n-j}\langle~[\xi_1,\cdots,\hat{\xi_j},\cdots,\xi_{n-1},\eta_i,[\eta_1,\cdots,\eta_{i-1},\hat{\eta_i},\eta_{i+1},\cdots,\eta_n,\xi_j]_{\g^\ast}]_{\g^\ast},x_j~\rangle.
\end{eqnarray*}
Since
\begin{eqnarray*}
% \nonumber to remove numbering (before each equation)
  &&-\sum\limits_{j=1}^{n-1}\langle~[\xi_1,\cdots,\xi_{n-1},[\eta_1,\cdots,\eta_n]_{\g^\ast}]_{\g^\ast},x_j~\rangle
 % &=& -\sum\limits_{j=1}^{n-1}(-1)^{n-j}\langle~[\xi_1,\cdots,\xi_{j-1},\hat{\xi_j},\xi_{j+1},\cdots,\xi_{n-1},[\eta_1,\cdots,\eta_n]_{\g^\ast},\xi_j]_{\g^\ast},x_j~\rangle \\
  = \sum\limits_{j=1}^{n-1}(-1)^{n-j}\langle~\xi_j,\ada^\ast_{\xi_1,\cdots,\xi_{j-1},\hat{\xi_j},\xi_{j+1},\cdots,\xi_{n-1},[\eta_1,\cdots,\eta_n]_{\g^\ast}}x_j~\rangle,
\end{eqnarray*}
and
\begin{eqnarray*}
% \nonumber to remove numbering (before each equation)
  && \sum\limits_{i=1}^{n}\sum\limits_{j=1}^{n-1}(-1)^{n-i}(-1)^{n-j}\langle~[\xi_1,\cdots,\hat{\xi_j},\cdots,\xi_{n-1},\eta_i,[\eta_1,\cdots,\eta_{i-1},\hat{\eta_i},\eta_{i+1},\cdots,\eta_n,\xi_j]_{\g^\ast}]_{\g^\ast},x_j~\rangle\\
  %&=& -\sum\limits_{i=1}^{n}\sum\limits_{j=1}^{n-1}(-1)^{n-i}(-1)^{n-j}\langle~[\eta_1,\cdots,\eta_{i-1},\hat{\eta_i},\eta_{i+1},\cdots,\eta_n,\xi_j]_{\g^\ast},\ad^\ast_{\xi_1,\cdots,\xi_{j-1},\hat{\xi_j},\xi_{j+1},\cdots,\xi_{n-1},\eta_i}x_j~\rangle \\
  &=& \sum\limits_{i=1}^{n}\sum\limits_{j=1}^{n-1}(-1)^{n-i}(-1)^{n-j}\langle~\xi_j,\ada^\ast_{\eta_1,\cdots,\eta_{i-1},\hat{\eta_i},\eta_{i+1},\cdots,\eta_n}(\ada^\ast_{\xi_1,\cdots,\xi_{j-1},\hat{\xi_j},\xi_{j+1},\cdots,\xi_{n-1},\eta_i}x_j)~\rangle,
\end{eqnarray*}
we get $(A_6)$ is zero.

By the completely similar discussion to the above, we get $(B_5),(B_6)$ are zero.
%Lastly, we prove that the multiplication on $\g\oplus\g^\ast$ defined by \eqref{eq:$n$-Lie-on-d} satisfied Filippov-Jacobi Identity.
Hence $(\frkd,[\cdot,\cdots,\cdot]_\frkd)$ is an $n$-Lie algebra.
\end{proof}
\begin{rmk}
When $n = 2$, it is exactly the conclusion for Lie bialgebras. When $n=3$,
 for all $~x_1,x_2,x_3\in \g, \xi_1,\xi_2,\xi_3\in\g^*$, the linear map $[\cdot,\cdot,\cdot]_\frkd:\wedge^3 \frkd\rightarrow \frkd$ satisfying
\begin{eqnarray*}
  [x_1+\xi_1,x_2+\xi_2,x_3+\xi_3]_\frkd &=&
  [x_1,x_2,x_3]+\ad^\ast_{x_2,x_{3}}\xi_1 -\ad^\ast_{x_1,x_{3}}\xi_2+\ad^\ast_{x_1,x_{2}}\xi_3\\ &+&[\xi_1,\xi_2,\xi_3]_{\g^\ast}+\ada^\ast_{\xi_2,\xi_{3}}x_1
-\ada^\ast_{\xi_1,\xi_{3}}x_2+\ada^\ast_{\xi_1,\xi_{2}}x_3,
\end{eqnarray*}
is a 3-Lie bracket, see \cite{Bai} for more details.% Thus, from this point
%of view, \eqref{eq:$n$-Lie-on-d} is a natural generalization of the one on Lie algebras  and 3-Lie algebras.
\end{rmk}

\begin{defi}\label{defi:doub-Li-bialg}

Let  $(\g,\gamma_{R}^1)$ be an $R_1$-operad $n$-Lie bialgebra. If $\gamma_{R}^1$ is a local operad centroid map, then we call $(\g,\gamma_{R}^1)$  a \bf{double construction $n$-Lie bialgebra}.
\end{defi}

\begin{pro}
Let  $(\g,\gamma)$ be an $n$-dimensional  double construction $n$-Lie bialgebra with a basis $\{e_1,\cdots,e_n\}$, and
\begin{equation*}
  [e_{a_1},\cdots,e_{a_n}]=\sum\limits_{k=1}^nT_{a_1,\cdots,a_n}^k e_k,
  \quad \gamma(e_{i})=\sum\limits_{s_1,\cdots,s_n=1}^nC_{i}^{s_1,\cdots,s_n}e_{s_1}\otimes\cdots\otimes e_{s_n},
\end{equation*}
where $1 \leq a_1,\cdots,a_n, s_1,\cdots,s_n, i,k\leq n$ are positive integers, $T_{a_1,\cdots,a_n}^k $, $C_{i}^{s_1,\cdots,s_n}\in F$. Then
%$(1)\gamma_{\g}$ satisfies Eq\eqref{eq:n-local-co} if and only if the following equation holds:
%\begin{equation}\label{eq:iff29}
%  \sum\limits_{s_1,\cdots,s_n,k=1}^n\big(T_{a_1,\cdots,a_n}^k C_k^{s_1,\cdots,s_n}-\sum\limits_{i=1}^n(-1)^{n-i}T_{a_1,\cdots,a_{i-1},\hat{a_i},a_{i+1},\cdots,a_n,k}^{s_1} C_{a_i}^{k,s_2,\cdots,s_n}\big)=0.
%\end{equation}
$\gamma$ satisfies \eqref{eq:n-centroid}, \eqref{eq:1} and \eqref{eq:2} if and only if the following \eqref{eq:iff30}, \eqref{eq:iff31} and \eqref{eq:iff32} hold, respectively,
\begin{equation}\label{eq:iff30}
  \sum\limits_{s_1,\cdots,s_n,k=1}^n\big(T_{a_1,\cdots,a_n}^k C_k^{s_1,\cdots,s_n}-\sum\limits_{i=1}^n(-1)^{n-1}T_{a_2,\cdots,a_n,k}^{s_i} C_{a_1}^{s_1,\cdots,s_{i-1},\hat{s_i},s_{i+1},\cdots,s_n,k}\big)=0,
\end{equation}
%(2) $\gamma_{\g}$ satisfies \eqref{eq:1} if and only if the following equations hold:
\begin{equation}\label{eq:iff31}
  \sum\limits_{s_1,\cdots,s_n,k=1}^n T_{a_2,\cdots,a_n,s_j}^k C_{a_1}^{s_1,\cdots,s_n}=\sum\limits_{s_1,\cdots,s_n,k=1}^n T_{a_2,\cdots,a_n,s_n}^{k} C_{a_1}^{s_1,\cdots,s_n}=0,\quad \forall~ j=1,2,\cdots,n-1,
\end{equation}
%(3) $\gamma_{\g}$ satisfies \eqref{eq:2} if and only if the following equations hold:
\begin{equation}\label{eq:iff32}
  \sum\limits_{s_1,\cdots,s_n,j=1}^n T_{a_2,\cdots,a_n,s_i}^j C_{a_1}^{s_1,\cdots,s_n}=\sum\limits_{s_1,\cdots,s_n,j=1}^n T_{a_1,\cdots,a_{n-1},s_k}^j C_{a_n}^{s_1,\cdots,s_n}=0,\quad \forall~ i,k=1,2,\cdots,n;~i\neq k.
\end{equation}
\end{pro}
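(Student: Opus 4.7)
The proof is a direct structure-constant computation; the three equivalences will follow from evaluating each of the conditions \eqref{eq:n-centroid}, \eqref{eq:1}, and \eqref{eq:2} on the basis tuple $(e_{a_1},\ldots,e_{a_n})$ and comparing coefficients of each basis tensor $e_{s_1}\otimes\cdots\otimes e_{s_n}\in\otimes^n\g$. Because every expression involved is multilinear over $F$ in its inputs, testing on basis tuples suffices, and both directions of each $\Leftrightarrow$ will be immediate once the coefficient comparison is carried out.

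For the centroid identity \eqref{eq:n-centroid}, the plan is to expand the left-hand side directly as
$\gamma([e_{a_1},\ldots,e_{a_n}]) = \sum T_{a_1,\ldots,a_n}^k C_k^{s_1,\ldots,s_n}\,e_{s_1}\otimes\cdots\otimes e_{s_n}$,
and to interpret the right-hand side $[\gamma(e_{a_1}),e_{a_2},\ldots,e_{a_n}]$ through the tensor derivation rule of \eqref{eq:ad-p} so that the bracket acts slot-by-slot on $\gamma(e_{a_1})=\sum C_{a_1}^{s_1,\ldots,s_n}\,e_{s_1}\otimes\cdots\otimes e_{s_n}$. Each internal bracket $[e_{a_2},\ldots,e_{a_n},e_{s_i}]$ is then written in its standardized form $\sum_k T_{a_2,\ldots,a_n,k}^{s_i}\,e_k$ after a relabeling of the summation index; the skew-symmetry of the $n$-ary bracket (a cyclic transposition of length $n$) produces the overall sign $(-1)^{n-1}$, and equating coefficients of $e_{s_1}\otimes\cdots\otimes e_{s_n}$ reproduces \eqref{eq:iff30}.

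For \eqref{eq:1} and \eqref{eq:2}, the analogous procedure is to substitute the expansion of $\gamma(e_{a_1})$ (and, in the case of \eqref{eq:2}, of $\gamma(e_{a_n})$) into the displayed slot-wise operators. In \eqref{eq:1}, the two summands of $1\otimes^{j-1}\ad_{e_{a_2},\ldots,e_{a_n}}\otimes^{n-j}1+1\otimes^{n-1}\ad_{e_{a_2},\ldots,e_{a_n}}$ act respectively in slot $j$ (for $1\le j\le n-1$) and slot $n$ of $\gamma(e_{a_1})$; expanding gives tensor sums whose coefficients are precisely the expressions $T_{a_2,\ldots,a_n,s_j}^k C_{a_1}^{s_1,\ldots,s_n}$ and $T_{a_2,\ldots,a_n,s_n}^k C_{a_1}^{s_1,\ldots,s_n}$ in \eqref{eq:iff31}. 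The treatment of \eqref{eq:2} is identical in spirit: the two derivation operators act in distinct slots $i$ and $k$ of $\gamma(e_{a_1})$ and $\gamma(e_{a_n})$ respectively, and comparing coefficients yields \eqref{eq:iff32}.

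The main obstacle is purely combinatorial, namely sign bookkeeping and index tracking. The cyclic-permutation factor $(-1)^{n-1}$ appearing in \eqref{eq:iff30} records the skew-symmetric rearrangement hidden in the standardized form of the $n$-bracket, and the dummy substitution $k\leftrightarrow s_i$ must be handled carefully to see that each summand lands in the correct tensor slot. Once the signs and index positions are settled in one slot, the remaining slot-by-slot verifications are entirely mechanical, and both the ``only if'' and ``if'' directions follow simultaneously since each implication reduces to reading off the coefficient of a fixed basis tensor $e_{s_1}\otimes\cdots\otimes e_{s_n}$.
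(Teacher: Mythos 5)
Your proposal is correct and follows the same route as the paper, whose proof is simply the statement that the result follows from direct computation with the structure constants $T_{a_1,\cdots,a_n}^k$ and $C_i^{s_1,\cdots,s_n}$. Your expansion of each condition on basis tuples, the slot-by-slot action via \eqref{eq:ad-p}, and the coefficient comparison (including the $(-1)^{n-1}$ from reordering the internal bracket) is exactly the computation the paper leaves implicit.
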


\begin{proof}
The result follows from the direct computation according to \eqref{eq:n-centroid}, \eqref{eq:1} and \eqref{eq:2}.
\end{proof}

%Note that $d=\g\bowtie\g^\ast$ is also the double of $g^\ast$. In the $n$-Lie algebra $d$, the subspaces $\g$ and $\g^\ast$ are complementary $n$-Lie subalgebras, and both are isotropic, which means that the scalar product vanishes on $\g$ and on $\g^\ast$.

%\begin{pro}\label{pro:d-loco-nLB}
%If $(\g,\gamma)$ is an $n$-Lie bialgebra and $d$ is the double of $\g$, then we can get a local cocycle $n$-Lie bialgebra.
%\end{pro}
%
%\begin{proof}
%Let $k_1,k_2,\cdots,k_n$ be complex numbers such that $k_1+k_2+\cdots+k_n=1$. Denote $\gamma_i=k_i\gamma,~i=1,2,\cdots,n$, then $\gamma=\gamma_1+\gamma_2+\cdots+\gamma_n$ defines a local cocycle $n$-Lie bialgebra.
%Since $d$ is the double of $\g$, we can obtain Eq?, then we get
%\begin{equation*}
%  (k_1+k_2+\cdots+k_n)\gamma=k_1\gamma_R^1+k_2\gamma_R^2+\cdots+k_n\gamma_R^n.
%\end{equation*}
%Thus, $\gamma=\gamma_{1R}^1+\gamma_{2R}^2+\cdots+\gamma_{nR}^n?$,~$(\g,\gamma)$ is a local cocycle $n$-Lie bialgebra.
%\end{proof}
%
%\begin{defi}\label{defi:quasi-loco-nLB}
%Let $\g$ be a $n$-Lie bialgebra and $\gamma$ satisfies Eqs\eqref{eq:n-local-co}-\eqref{eq:2}, then we call $(\g,\gamma)$ is a quasi-$n$-Lie bialgebra.
%\end{defi}
%
%Therefore, for any quasi-$n$-Lie bialgebra $\g$, $(d, \g, \g^\ast)$ is an example of a Manin triple.

\begin{thm}\label{main}
Let  $\g$ be an $n$-Lie algebra, $\g^*$ be the dual space of $\g$, and $\gamma_\g:\g\rightarrow \otimes^n\g$ be a linear map such that
$^t\gamma_\g: \otimes^n\g^*\rightarrow \g^*$ defines an $n$-Lie algebra structure on $\g^*$.
%For all $x_1,\cdots,x_{n-1},x\in \g$, $\xi_1,\cdots,\xi_{n-1},\xi\in \g^*$, suppose that $\langle\cdot,\cdot\rangle_\frkd$ satisfying
%\begin{eqnarray}
%% \nonumber to remove numbering (before each equation)
%   &&\langle\xi_2,[x_1,\cdots,x_{n-1},\xi_1]_\frkd\rangle_\frkd=0,
%\label{eq:one-g*}\\
%   &&\langle x_2,[\xi_1,\cdots,\xi_{n-1},x_1]_\frkd\rangle_\frkd=0,
%\label{eq:one-g}
%\end{eqnarray}
%and
%\begin{equation}\label{eq:other-g}
%\left\{
%\begin{split}
%&\langle x+\xi,[x_1,\cdots,x_{n-2},\xi_1,\xi_2]_\frkd\rangle_\frkd=0,\\
%   &\quad\quad\quad\quad\quad\vdots \\
%   &\langle x+\xi,[x_1,x_2,\xi_1,\cdots,\xi_{n-2}]_\frkd\rangle_\frkd=0.
%\end{split}
%\right.
%\end{equation}
Then $(\g,\gamma_{\g})$ is a double construction $n$-Lie bialgebra if and only if $((\frkd,[\cdot,\cdots,\cdot]_\frkd,\langle\cdot,\cdot\rangle_\frkd),\g,\g^\ast)$ is a Manin triple of $n$-Lie algebra, where $\frkd=\g\oplus \g^*$, $[\cdot,\cdots,\cdot]_\frkd:\otimes^n \frkd \rightarrow \frkd$ is given by \eqref{eq:$n$-Lie-on-d}, and
$\langle\cdot,\cdot\rangle_\frkd:\frkd\times \frkd\rightarrow F$ is a bilinear form given by
\begin{equation}\label{eq:d-bili}
  \langle~x+\xi,y+\eta~\rangle_\frkd =\langle~\eta,x~\rangle+\langle~\xi,y~\rangle, \quad\forall~ x,y \in \g, \xi,\eta \in \g^\ast.
\end{equation}

\end{thm}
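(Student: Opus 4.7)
The plan is to prove both implications using the explicit formula \eqref{eq:$n$-Lie-on-d} for the bracket on $\frkd = \g \oplus \g^*$ together with the canonical hyperbolic pairing \eqref{eq:d-bili}. Throughout, I would exploit the multilinearity of $[\cdot,\cdots,\cdot]_\frkd$ in each slot to reduce all checks to the case where each argument lies purely in $\g$ or purely in $\g^*$.

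For the forward direction, Proposition \ref{pro:double-Lie} already supplies the $n$-Lie structure on $\frkd$, so it remains to verify the metric and Manin triple axioms. Non-degeneracy and symmetry of $\langle\cdot,\cdot\rangle_\frkd$, isotropy of $\g$ and $\g^*$, and the fact that these are $n$-Lie subalgebras are immediate from the definitions and from \eqref{eq:$n$-Lie-on-d}. The invariance \eqref{eq:$n$-Lie-inva} splits by multilinearity into cases according to the distribution of $\g$ versus $\g^*$ among the $n+1$ arguments; the all-$\g$ and all-$\g^*$ cases are trivial because both terms pair elements of the same isotropic subspace, and the mixed cases reduce, via Proposition \ref{pro:co-ad} and its dual for $\ada^*$, to the defining identity $\langle \ad^*_X \xi, y\rangle = -\langle \xi, \ad_X y\rangle$. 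Conditions \eqref{eq:inv-1} and \eqref{eq:inv-2} are read off directly from \eqref{eq:$n$-Lie-on-d} (the mixed $(n{-}1,1)$-brackets land entirely in $\g^*$ or $\g$ respectively), and \eqref{eq:inv-3} holds because any mixed bracket with two or more arguments of each type vanishes: every summand in \eqref{eq:$n$-Lie-on-d} is multilinear in its component arguments, and the required number of zero entries kills every term.

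For the reverse direction, start with the Manin triple and define $\gamma_\g$ by ${}^t\gamma_\g = [\cdot,\cdots,\cdot]_{\g^*}$. The first step is to show that the bracket on $\frkd$ must coincide with \eqref{eq:$n$-Lie-on-d}. Conditions \eqref{eq:inv-1}--\eqref{eq:inv-3} determine which subspace each mixed bracket lands in (only the pure and $(n{-}1,1)$-split brackets are nonzero), and then invariance with respect to $\langle\cdot,\cdot\rangle_\frkd$ identifies the nonzero mixed component as $\ad^*_{x_1,\cdots,x_{n-1}}(\xi)$ in one case and as $\ada^*_{\xi_1,\cdots,\xi_{n-1}}(x)$ in the dual case. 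Once the bracket has the claimed form, Proposition \ref{pro:double-Lie} is read in reverse: the Filippov--Jacobi identity on $\frkd$, restricted to $n-1$ arguments in $\g$ and one in $\g^*$, produces after pairing with basis elements of $\otimes^n \g^*$ exactly the $R_1$-operad identity \eqref{$R$}; restricted to arguments split as $n-2$ in $\g$ and $2$ in $\g^*$ (and symmetrically) it produces the centroid condition \eqref{eq:n-centroid} together with the local operad relations \eqref{eq:1} and \eqref{eq:2}. Hence $\gamma_\g$ is a local operad centroid map and $(\g,\gamma_\g)$ is a double construction $n$-Lie bialgebra.

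The main obstacle is the index bookkeeping in the reverse direction: each application of the Filippov--Jacobi identity on mixed arguments in $\frkd$ produces a long sum of $\ad, \ad^*, \ada, \ada^*$ terms, and one must pair with the right test elements of $\g$ or $\g^*$ and exploit the linear independence of distinct tensor slots in $\otimes^n \g$ and $\otimes^n \g^*$ to isolate \eqref{$R$}, \eqref{eq:n-centroid}, \eqref{eq:1}, and \eqref{eq:2}. The forward direction is essentially a verification, whereas the reverse direction requires a careful accounting argument which is the natural generalization of the $n=3$ analysis of \cite{Bai} and mirrors (in the opposite logical direction) the calculations appearing in the proof of Proposition \ref{pro:double-Lie}.
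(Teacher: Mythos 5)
Your proposal is correct and follows essentially the same route as the paper: the forward direction invokes Proposition \ref{pro:double-Lie} and checks the metric/isotropy/invariance axioms directly, and the reverse direction uses invariance of $\langle\cdot,\cdot\rangle_\frkd$ together with conditions \eqref{eq:inv-1}--\eqref{eq:inv-3} to pin down the bracket as \eqref{eq:$n$-Lie-on-d} and then runs the computation of Proposition \ref{pro:double-Lie} backwards to recover the $R_1$-operad, centroid, and local operad conditions. If anything, your write-up is more explicit than the paper's (which compresses the last step into ``by the similar discussion to Proposition \ref{pro:double-Lie}'').
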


\begin{proof}

%Firstly, we prove that double construction $n$-Lie bialgebra defines a Manin triple of $n$-Lie algebra.

It is clear that $\langle\cdot,\cdot\rangle_\frkd$ defined by \eqref{eq:d-bili} is a non-degenerate symmetric bilinear form on $\frkd$, $\g$ and $\g^*$ are isotropic with respect to $\langle\cdot,\cdot\rangle_\frkd$. Thanks to the Proposition \ref{pro:double-Lie}, if $(\g,\gamma_{\g})$ is a double construction $n$-Lie bialgebra, then $[\cdot,\cdots,\cdot]_\frkd$ is an $n$-Lie bracket that satisfies \eqref{eq:inv-1}, \eqref{eq:inv-2} and \eqref{eq:inv-3}.
Therefore, %by Definition \ref{defi:Manin-tri},
$((\frkd,[\cdot,\cdots,\cdot]_\frkd,\langle\cdot,\cdot\rangle_\frkd),\g,\g^\ast)$ is a Manin triple of $n$-Lie algebra.

Conversely, suppose that $((\frkd,[\cdot,\cdots,\cdot]_\frkd,(\cdot,\cdot)),\g,\g^\ast)$ is a Manin triple of $n$-Lie algebra. By \eqref{eq:$n$-Lie-inva},
%and $(\g,[\cdot,\cdots,\cdot])$ is an $n$-Lie subalgebra of $(\frkd,[\cdot,\cdots,\cdot]_\frkd)$,
for all  $x_1,\cdots,x_n\in \g, \xi\in\g^*$, we have
\begin{align*}
  &\langle x_n,[x_1,\cdots,x_{n-1},\xi]_\frkd\rangle_{\frkd}
  = -\langle [x_1,\cdots,x_{n-1},x_n]_\frkd,\xi\rangle_{\frkd}
  = -\langle[x_1,\cdots,x_{n-1},x_n],\xi\rangle_{\frkd}.
\end{align*}
By \eqref{eq:d-bili},
\begin{align*}
  -\langle~\xi,[x_1,\cdots,x_{n-1},x_n]~\rangle
  = \langle~\ad^\ast_{x_1,\cdots,x_{n-1}}\xi,x_n~\rangle
  = \langle x_n,\ad^\ast_{x_1,\cdots,x_{n-1}}\xi\rangle_{\frkd}.
\end{align*}
Then
\begin{equation}\label{eq:double-one-g*}
[x_1,\cdots,x_{n-1},\xi]_\frkd=\ad^\ast_{x_1,\cdots,x_{n-1}}\xi.
\end{equation}
Similarly,  for all $\xi_1,\cdots,\xi_n\in \g^*,x\in\g$, we have
\begin{align*}
%\langle\xi_n,[\xi_1,\cdots,\xi_{n-1},x]_\frkd\rangle_{\frkd}
%&=\langle\xi_n,\ada^\ast_{\xi_1,\cdots,\xi_{n-1}}x\rangle_{\frkd}.\\
[\xi_1,\cdots,\xi_{n-1},x]_\frkd &=\ada^\ast_{\xi_1,\cdots,\xi_{n-1}}x,\\
[x_1,\cdots,x_{n-2},\xi_1,\xi_2]_\frkd=0,&\cdots,[x_1,x_2,\xi_1,\cdots,\xi_{n-2}]_\frkd=0.
\end{align*}
Then we get \eqref{eq:$n$-Lie-on-d}.
%Let $\gamma_\g:\g\rightarrow \otimes^n\g$ be a linear map such that
%$^t\gamma_\g: \otimes^n\g^*\rightarrow \g^*$ defines an $n$-Lie algebra structure on $A^*$.
By the similar discussion to Proposition \ref{pro:double-Lie}, $\gamma_\g$ satisfies \eqref{eq:n-centroid}, \eqref{eq:1} and \eqref{eq:2}. It follows that $^t\gamma_\g: \otimes^n\g^*\rightarrow \g^*$ defines an $n$-Lie algebra structure on $\g^*$, and $(\g,\gamma_\g)$ is a double construction $n$-Lie bialgebra.  This completes the proof.

% $[\cdot,\cdots,\cdot]_\frkd$ is an $n$-Lie bracket
%such that $(\g,[\cdot,\cdots,\cdot])$ and $(\g^*,[\cdot,\cdots,\cdot]_{\g^*})$ are $n$-Lie subalgebras of $(\frkd,[\cdot,\cdots,\cdot]_\frkd)$, and the symmetric bilinear form $\langle\cdot,\cdot\rangle_\frkd$ is invariant.

%
%$$\langle\xi_n,[\xi_1,\cdots,\xi_{n-1},x]_\frkd\rangle_{\frkd}
%=\langle\xi_n,\ad^\ast_{\xi_1,\cdots,\xi_{n-1}}x\rangle_{\frkd}.$$
%\begin{equation}\label{eq:double-g}
%[\xi_1,\cdots,\xi_{n-1},x+\xi]_\frkd=\ad^\ast_{\xi_1,\cdots,\xi_{n-1}}x.
%\end{equation}
%\begin{equation}\label{eq:other-brackt}
%  [x_1,\cdots,x_{n-2},\xi_1,\xi_2]_\frkd=0,\cdots,[x_1,x_2,\xi_1,\cdots,\xi_{n-2}]_\frkd=0.
%\end{equation}
%Therefore, the  linear map $[\cdot,\cdots,\cdot]_\frkd:\wedge^n \frkd\rightarrow \frkd$ satisfying  \eqref{eq:$n$-Lie-on-d}.

%It can be directly obtained from Proposition \ref{pro:double-Lie} and  Lemma \ref{lem:$n$-Lie-alg-d}.
\end{proof}

{\bf Acknowledgements:}
The fourth  author acknowledges support from the NSF China (12101328).

\end{document}